\documentclass[10pt]{amsart}

\usepackage{amsmath}
\usepackage{amsfonts}
\usepackage{amssymb}
\usepackage{graphicx}
\usepackage{color}
\usepackage{hyperref}
\parindent 17pt 
\parskip 4pt

\usepackage{xcolor}

\newtheorem{theorem}{Theorem}[section]
\newtheorem{thm}[theorem]{Theorem}
\newtheorem*{theorem*}{Theorem}
\newtheorem{lemma}{Lemma}[section]
\newtheorem{corollary}[theorem]{Corollary}

\newtheorem{prop}{Proposition}[section]
\newtheorem{remark}[theorem]{Remark}


\def \b {\beta}

\def\Ric{\text{Ric}}

\def\a{\alpha}
\def\l{\lambda}

\def\g{\gamma}

\def\e{\epsilon}
\def\p{\partial}

\def\R{\mathbb{R}}

\def\vp{\varphi}

\def\k{\kappa}

\def\L{{\mathcal L}}

\def\w{\omega}

\def\Ric{\operatorname{Ric}}

\def\n{\nabla}

  

\numberwithin{equation}{section}

\begin{document}

\title[Lower bounds for the first eigenvalue on K\"ahler manifolds]{Lower bounds for the first eigenvalue of the Laplacian on K\"ahler manifolds}


\author{Xiaolong Li}
\address{(to start) Department of Mathematics and Statistics, 
McMaster University, Hamilton, Ontario, L8S 4K1, Canada}
\email{lxlthu@gmail}

\author{Kui Wang} \thanks{The research of the second author is supported by NSFC No.11601359} 
\address{School of Mathematical Sciences, Soochow University, Suzhou, 215006, China}
\email{kuiwang@suda.edu.cn}


\subjclass[2010]{35P15, 53C55}

\keywords{Eigenvalue comparison, modulus of continuity, orthogonal Ricci curvature}

\begin{abstract}
We establish lower bound for the first nonzero eigenvalue of the Laplacian on a closed K\"ahler manifold in terms of dimension, diameter, and lower bounds of holomorphic sectional curvature and orthogonal Ricci curvature. 
On compact K\"ahler manifolds with boundary, we prove lower bounds for the first nonzero Neumann or Dirichlet eigenvalue in terms of geometric data. 
Our results are K\"ahler analogues of well-known results for Riemannian manifolds.  

\mbox{} 

\textit{Dedicated to Professor Richard Schoen on the occasion of his 70th birthday.}

\end{abstract}

\maketitle

\section{Introduction}
Let $(M^n,g)$ be an $n$-dimensional compact connected Riemannian manifold, possibly with a smooth nonempty boundary $\p M$. 
Denote by $\Delta$ the Laplacian of the metric $g$.  In this paper, we shall consider the following three eigenvalue problems on the manifold $(M,g)$. When $\p M=\emptyset$, the closed eigenvalue problem seeks all real numbers $\l$ for which there are nontrivial solutions $u \in C^2(M)$ to the equation 
\begin{equation}\label{eq eigenvalue}
    \Delta u =-\l u.
\end{equation}
When $\p M \neq \emptyset$, the Dirichlet eigenvalue problem is to find all real numbers $\l$ for which there are nontrivial solutions $u \in C^2(M)\cap C^0(\overline{M})$ to \eqref{eq eigenvalue}, subject to the boundary condition
$$u=0 \text{ on } \p M,$$
and the Neumann eigenvalue problem looks for all real numbers $\l$ for which there are nontrivial solutions $u \in C^2(M)\cap C^1(\overline{M})$ to \eqref{eq eigenvalue}, subject to the boundary condition
$$\frac{\p u}{\p \nu} =0 \text{ on } \p M,$$
where $\nu$ denotes the outward unit normal vector fields of $\p M$. One can also consider these eigenvalue problems on a compact K\"ahler manifold by viewing it as a Riemannian manifold.

It is well-known that the spectrum of the Laplacian on a closed manifold consists of pure point spectrum $\{\mu_i\}_{i=0}^{\infty}$ that can be arranged in the order
\begin{equation*}
    0=\mu_0 < \mu_1 \leq \mu_2 \leq \mu_3 \leq \cdots  \to \infty.
\end{equation*}
The same holds for the spectrum with Neumann boundary condition. The Dirichlet spectrum consists of pure point spectrum $\{\l_i\}_{i=0}^{\infty}$ that can be arranged in the order
\begin{equation*}
    0 < \l_1 < \l_2 \leq \l_3 \leq \cdots  \to \infty.
\end{equation*}


The study of the first nonzero eigenvalues is an important issue in both mathematics and physics, since this constant determines the convergence rate of numerical schemes in numerical analysis, describes the energy of a particle in the ground state in quantum mechanics, and determines the decay rate of certain heat flows in thermodynamics. Given its physical and mathematical significance, numerous bounds have been established for the first nonzero eigenvalue of the Laplacian on a compact Riemannian manifold (with Dirichlet or Neumann boundary conditions if the boundary is nonempty). We refer the reader to the books \cite{Chavel84}\cite{SYbook} for classical results and the surveys \cite{LL10}\cite{Andrewssurvey15} for later developments. 


However, there are very few results for K\"ahler manifolds unless they are viewed as Riemannian manifolds. Lichnerowicz \cite{Lichnerowicz58} showed that if $M$ is closed K\"ahler manifold with $\Ric \geq (n-1)\k >0$, then $\mu_1 \geq 2(n-1)\k$. Note that this is a remarkable improvement of his well-known result in the Riemannian case, which asserts that $\mu_{1} \geq n\k$
if $M$ is a closed Riemannian manifold with $\Ric \geq (n-1)\k >0$. 
Both his results are sharp as they are achieved by the complex projective space with the Fubini-Study metric and the sphere with the round metric, respectively. The proofs are simple applications of the Bochner formula or its K\"ahler version \cite{Lichnerowicz58} (see also  \cite[Theorem 5.1]{Libook} for the Riemannian case and \cite[Theorem 6.14]{Ballmannbook} for the K\"ahler case, and also \cite{Urakawa87} for an alternative proof via harmonic maps in the K\"ahler case). Moreover, both Lichnerowicz's results hold for the first Dirichlet eigenvalue of the Laplacian if the boundary is convex. The Riemannian one is proved by Reilly \cite{Reilly79} and the K\"ahler one is a recent result of Guedj, Kolev  and Yeganefar \cite{GKY13}. Finally, we would like to mentioned that these results for K\"ahler manifolds have been generalized to the $p$-Laplacian by Blacker and Seto \cite{BS19} by establishing a $p$-Reilly formula. More precisely, they proved that if $M$ is a closed K\"ahler manifold (or with a convex boundary) with $\Ric \geq \k >0$, then the first nonzero eigenvalue (or first Dirichlet eigenvalue) of the $p$-Laplacian is bounded from below by $\left(\frac{p+2}{p(p-1)}(n-1)\k \right)^{\frac{p}{2}}$ provided that $p\geq2$. 

The above-mentioned results indicate that if one takes the K\"ahlerity into consideration, then the lower bound should be improved, as least in the positive curvature case.
In general, lower bounds that are sharp for Riemannian manifolds are not sharp for K\"ahler manifolds. Therefore, it is the purpose of this article to investigate the first nonzero eigenvalue of the Laplacian on a compact K\"ahler manifold and establish lower bounds that reflect the K\"ahler structure. 


Lichnerowicz's Riemannian result is a special case of the following optimal lower bound on the first nonzero eigenvalue of the Laplacian in terms of dimension, diameter, and Ricci curvature lower bound.

\begin{thm}\label{Thm first eigenvalue for Laplacian}
Let $(M^n,g)$ be a compact Riemannian manifold (possibly with a smooth convex boundary) with diameter $D$ and $\Ric \geq (n-1)\k$ for $\k \in \R$. 
Let $\mu_1$ be the first nonzero eigenvalue of the Laplacian on $M$ (with Neumann boundary condition if $\p M \neq \emptyset$).
Then
\begin{equation*}
    \mu_1 \geq \bar{\mu}_1(n,\k,D),
\end{equation*}
where $\bar{\mu}_1(n,k,D)$ is the first nonzero Neumann eigenvalue of the one-dimensional eigenvalue problem:
\begin{equation*}
  \vp''-(n-1)T_\k \vp' =-\l \vp
\end{equation*}
on the interval $[-D/2,D/2]$.
\end{thm}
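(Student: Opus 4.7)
The plan is to prove this via the Andrews--Clutterbuck two-point maximum principle, which produces a sharp modulus of continuity for the first eigenfunction. Let $u$ be a first nonzero eigenfunction, normalized so that $\min_M u = -1$ and $\max_M u \leq 1$. Let $\vp$ be the odd first Neumann eigenfunction of the model ODE $\vp'' - (n-1)T_\k \vp' = -\bar{\mu}_1 \vp$ on $[-D/2,D/2]$, with $\vp' > 0$ in the interior, chosen so that its range covers that of $u$. The target estimate is the pointwise gradient bound $|\n u|(x) \leq \vp'\bigl(\vp^{-1}(u(x))\bigr)$; integrating this along a minimizing geodesic joining a minimum point of $u$ to a maximum point will give $D \geq \int_{-1}^{1} dv/\vp'(\vp^{-1}(v))$, which combined with the definition of $\bar\mu_1$ forces $\mu_1 \geq \bar{\mu}_1(n,\k,D)$.

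The central step is to show that the two-point function
\[ Z(x,y) := u(y) - u(x) - 2\vp\lf(\frac{d(x,y)}{2}\ri) \]
satisfies $Z \leq 0$ on $M \times M$. Suppose for contradiction that $Z$ attains a positive maximum at some $(x_0,y_0)$ with $x_0 \neq y_0$. First-order conditions identify $\n u(x_0)$ and $\n u(y_0)$ (up to sign) with the unit tangent of a minimizing geodesic $\g$ between $x_0$ and $y_0$. Second-order conditions, applied in the $\g$-direction together with $(n-1)$ parallel orthonormal normal directions, and combined with the Jacobi-field/second-variation estimate
\[ \D_x d(x_0,y_0) + \D_y d(x_0,y_0) \leq -2(n-1)\, T_\k\lf(\frac{d_0}{2}\ri) \]
coming from $\Ric \geq (n-1)\k$, yield an inequality in which substituting the eigenvalue equation $\D u = -\mu_1 u$ and the model ODE for $\vp$ produces a contradiction unless $\mu_1 \geq \bar{\mu}_1$.

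Several technical points require care. If $(x_0,y_0)$ lies at a cut-locus pair, I would invoke Calabi's trick and replace $d(x,y)$ near $(x_0,y_0)$ by a smooth upper barrier constructed by perturbing the endpoints of $\g$ slightly. For the Neumann case with nonempty boundary, convexity of $\p M$ is used to rule out a maximizer of $Z$ having $x_0$ or $y_0$ on $\p M$: the outward normal derivative of $d(\cdot,y_0)$ at such a boundary point is nonpositive by convexity, and together with $\p u/\p\nu = 0$ this is incompatible with maximality in the boundary variables. The step I expect to be the main obstacle is the careful second-variation computation delivering the $(n-1)\,T_\k$ term with the correct sign summed over all $(n-1)$ normal directions, which requires extending parallel orthonormal frames along the entire geodesic $\g$ and a clean application of the index form inequality; this is also precisely the step where the Riemannian bound fails to reflect the K\"ahler structure and where the later arguments of the paper must replace $(n-1)$ by terms sensitive to the holomorphic sectional and orthogonal Ricci curvatures.
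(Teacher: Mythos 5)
The paper never proves Theorem~\ref{Thm first eigenvalue for Laplacian}; it is quoted as a known result (Zhong--Yang, Kr\"oger, Bakry--Qian, Andrews--Clutterbuck, Wang--Zhang). The machinery the paper actually develops for its own K\"ahler analogue, Theorem~\ref{thm main}, is the \emph{parabolic} modulus-of-continuity route of \cite{AC13}: one shows (Theorem~\ref{thm MC}) that the modulus of continuity of a solution of the heat flow is a viscosity subsolution of the one-dimensional model equation $\omega_t = \L\omega$, and then reads off $\mu_1 \geq \bar\mu_1$ from the exponential decay of $e^{-\mu_1 t}u$ versus $e^{-\bar\mu_1 t}\phi$ (Section~4). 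Your proposal is instead the \emph{elliptic} route, in the spirit of \cite{Ni13}\cite{WZ17}, applied directly to the eigenfunction. That is a legitimate alternative, but as written there are two real gaps.

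First, the estimate $Z(x,y) := u(y)-u(x) - 2\vp\left(\frac{d(x,y)}{2}\right) \leq 0$ does \emph{not} yield the state-dependent gradient bound $|\n u|(x) \leq \vp'\left(\vp^{-1}(u(x))\right)$ that you declare as your target. Sending $y\to x$ in $Z\le 0$ gives only the constant bound $|\n u|(x)\le\vp'(0)$; the pointwise bound $|\n u|\le \vp'\circ\vp^{-1}\circ u$ is the content of the Kr\"oger/Bakry--Qian \emph{one-point} maximum principle (applied to $P=|\n u|^2 - \vp'(\vp^{-1}(u))^2$), or equivalently of a Lipschitz estimate for $\vp^{-1}\circ u$, i.e.\ a two-point estimate of the form $\vp^{-1}(u(y))-\vp^{-1}(u(x))\le d(x,y)$. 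You are conflating the mechanics of the gradient-estimate method with those of the two-point modulus-of-continuity method; the "integrate the gradient bound along a geodesic" step cannot be fed by $Z\le0$.

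Second, the proposed contradiction at a \emph{positive} maximum of $Z$ does not close. At such a maximum, the first- and second-derivative tests together with the index-form estimate give exactly
\[
\Delta u(y_0)-\Delta u(x_0)\;\le\; 2\vp''-2(n-1)T_\k\!\left(\tfrac{d_0}{2}\right)\vp',
\]
and substituting $\Delta u=-\mu_1 u$ and the model ODE for $\vp$ yields $\mu_1\big(u(y_0)-u(x_0)\big)\ge 2\bar\mu_1\,\vp(d_0/2)$. But at a positive maximum one has $u(y_0)-u(x_0) > 2\vp(d_0/2) > 0$, so this inequality is \emph{weaker} than $\mu_1\ge\bar\mu_1$ and produces no contradiction. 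To make the elliptic argument work one must instead normalize (e.g.\ scale $u$) so that $\max Z = 0$ with the maximum attained off the diagonal, or — as in \cite{Ni13}\cite{WZ17} — reformulate the conclusion as: the modulus of continuity $\omega$ of $u$ is a viscosity subsolution of $\omega''-(n-1)T_\k\omega'+\mu_1\omega\ge 0$, and then invoke a one-dimensional comparison/Sturm-type argument against the model eigenfunction $\bar\vp$. Either repair is genuine work, not a cosmetic fix. Your remarks on Calabi's trick at cut-locus pairs and on ruling out boundary maximizers via convexity and the Neumann condition are correct and match the argument used for the K\"ahler case in the proof of Theorem~\ref{thm MC}.
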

Here and in the rest of this article, the function $T_\k$ is defined for $\k \in \R$ by
\begin{equation}\label{def T}
     T_\k(t)=\begin{cases}
   \sqrt{\k} \tan{(\sqrt{\k}t)}, & \k>0, \\
   0, & \k=0, \\
   -\sqrt{-\k}\tanh{(\sqrt{-\k}t)}, & \k<0.
    \end{cases}
\end{equation}

The above theorem is proved by Zhong and Yang for the $\k=0$ case and by Kr\"oger \cite{Kroger92} for general $\k\in \R$ (see also Bakry and Qian \cite{BQ00} for the above explicit form and extensions to smooth metric measure spaces). These works use the gradient estimates method initiated by Li \cite{Li79} and Li and Yau \cite{LY80}. In 2013, Andrews and Clutterbuck \cite{AC13} gave a simple proof via the modulus of continuity estimates (see also \cite{WZ17} for an elliptic proof based on \cite{AC13} and \cite{Ni13}). 

The first main theorem of this paper is the following K\"ahler analogue of the above theorem. 
\begin{thm}\label{thm main}
Let $(M^m, g, J)$ be a compact K\"ahler manifold of complex dimension $m$ and diameter $D$ whose holomorphic sectional curvature is bounded from below by $4\k_1$ and orthogonal Ricci curvature is bounded from below by $2(m-1)\k_2$ for some $\k_1,\k_2 \in \R$.    
Let $\mu_{1}$ be the first nonzero eigenvalue of the Laplacian on $M$ (with Neumann boundary condition if $M$ has a strictly convex boundary). Then 
\begin{equation*}
    \mu_{1} \geq \bar{\mu}_{1}(m,\k_1,\k_2,D),
\end{equation*}
where $\bar{\mu}_{1}(m,\k_1,\k_2,D)$ is the first Neumann eigenvalue of the one-dimensional eigenvalue problem 
\begin{equation*}
    \vp''-\left(2(m-1)T_{\k_2}+T_{4\k_1} \right)\vp' =-\l \vp
\end{equation*}
on $[-D/2,D/2]$.
\end{thm}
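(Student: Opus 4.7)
The plan is to adapt the modulus of continuity method of Andrews and Clutterbuck to the K\"ahler setting. The Riemannian proof of Theorem \ref{Thm first eigenvalue for Laplacian} rests on the Laplacian comparison $\Delta r \leq (n-1) T_\k(r)$, which is an averaged consequence of $\Ric \geq (n-1)\k$. In the K\"ahler case the key observation is that the complex line $\spn\{\gamma', J\gamma'\}$ is parallel along any geodesic $\gamma$, since $\nabla J = 0$. Consequently the transverse Jacobi contributions decouple into two blocks: one term from the $J\gamma'$ direction, which is controlled by the holomorphic sectional curvature $H(\gamma')$, and $2(m-1)$ terms from the complex-orthogonal directions whose sum is the orthogonal Ricci curvature $\Ric^\perp(\gamma',\gamma')$.

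First I would establish a K\"ahler Laplacian comparison: under the hypotheses $H \geq 4\k_1$ and $\Ric^\perp \geq 2(m-1)\k_2$, along any unit-speed minimizing geodesic from the base point one has
\begin{equation*}
\Delta r \leq T_{4\k_1}(r) + 2(m-1)\, T_{\k_2}(r).
\end{equation*}
This follows by choosing a parallel frame $\{\gamma', J\gamma', e_3, \ldots, e_{2m}\}$ along the geodesic and running an index-form comparison slot by slot: the $J\gamma'$ slot produces $T_{4\k_1}$ because $K(\gamma', J\gamma') = H(\gamma') \geq 4\k_1$, while the remaining $2(m-1)$ slots collectively produce $2(m-1)T_{\k_2}$ via a Riccati-type argument driven by $\sum_{j=3}^{2m} K(\gamma', e_j) = \Ric^\perp(\gamma',\gamma') \geq 2(m-1)\k_2$, compared against the corresponding symmetric one-dimensional model.

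Next I would run the elliptic two-point maximum principle. Let $\vp$ be the odd first Neumann eigenfunction of the one-dimensional problem with $\vp' > 0$ and eigenvalue $\bar\mu_1$. Fix a first eigenfunction $u$ on $M$ and let $C > 0$ be the least constant such that
\begin{equation*}
u(y) - u(x) \leq 2 C\, \vp\!\lf(\tfrac{d(x,y)}{2}\ri) \qquad \text{for all } x, y \in M.
\end{equation*}
If $C$ is positive, a contact pair $(x_0,y_0)$ with $x_0 \neq y_0$ exists; strict convexity of $\p M$, combined with the Neumann condition, rules out boundary contact, so $(x_0,y_0)$ lies in the interior. Along a minimizing geodesic $\gamma:[0,L]\to M$ from $x_0$ to $y_0$, first-order optimality forces $\nabla u$ at the endpoints to be proportional to $\gamma'$ with magnitude $C\vp'(L/2)$. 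The standard second-order condition, after parallel-transporting the K\"ahler frame along $\gamma$ and summing second variations, combined with the Laplacian comparison established in the previous step, yields
\begin{equation*}
-2C\mu_1\,\vp(L/2) = \Delta u(y_0) - \Delta u(x_0) \leq 2C \lf\{ \vp''(L/2) - [T_{4\k_1}(L/2) + 2(m-1)T_{\k_2}(L/2)]\, \vp'(L/2) \ri\}.
\end{equation*}
The one-dimensional ODE satisfied by $\vp$ converts the right-hand side into $-2C\bar\mu_1 \vp(L/2)$, forcing $\mu_1 \geq \bar\mu_1$.

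The main obstacle is the refined K\"ahler Laplacian comparison: one must verify that the $J\gamma'$ contribution isolates cleanly from the rest, which rests on the parallelism of $J$ along geodesics and the fact that the complex-orthogonal subbundle is itself parallel and $J$-invariant. A secondary subtlety is checking that the Neumann problem on $[-D/2, D/2]$ with damping coefficient $2(m-1)T_{\k_2} + T_{4\k_1}$ is well posed with a nontrivial first eigenvalue, which reduces to a standard Sturm-Liouville analysis. The boundary-contact elimination via strict convexity mirrors the Riemannian argument of Andrews and Clutterbuck and requires no K\"ahler-specific modification.
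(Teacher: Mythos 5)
Your overall framework is sound: the K\"ahler decomposition (holomorphic sectional curvature controls the $J\gamma'$ direction, orthogonal Ricci curvature controls the remaining $2(m-1)$ transverse directions, with $\nabla J=0$ ensuring these blocks are parallel along geodesics) is exactly the right idea, and the elliptic two-point maximum principle is a legitimate alternative to the parabolic modulus-of-continuity argument used in the paper (the paper follows Andrews--Clutterbuck, evolving both the eigenfunction and the one-dimensional profile by the corresponding heat equations and letting $t\to\infty$; your elliptic version in the spirit of Wei--Zhang works equally well). The boundary handling and the Sturm--Liouville analysis you defer are also correctly identified as routine.

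However, there is a concrete error at the heart of the argument: the stated ``K\"ahler Laplacian comparison'' $\Delta r \leq T_{4\k_1}(r) + 2(m-1)T_{\k_2}(r)$ is false. With $T_\kappa$ as defined in the paper (tangent/tanh-type, $T_\kappa(0)=0$), the right-hand side vanishes at $r=0$, while $\Delta r \sim (2m-1)/r \to +\infty$ as $r\to 0^+$; already for $\k_1=\k_2=0$ (flat $\mathbb{C}^m$) the claimed bound reads $\Delta r \leq 0$, which is absurd. The genuine one-point Laplacian comparison of Ni--Zheng uses cotangent-type functions $s_\kappa'/s_\kappa$ built from the odd solutions $s_\kappa$ of $s''+\kappa s=0$, which blow up at the origin. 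The ingredient your two-point maximum principle actually requires is not a one-point Laplacian comparison at all, but a genuinely two-endpoint second-variation estimate along a minimizing geodesic $\gamma:[-s_0,s_0]\to M$ between two interior contact points, with the variation vector fields taken to be the \emph{even} cosine-type profiles $\eta(s)=c_{4\k_1}(s)/c_{4\k_1}(s_0)$ in the $J\gamma'$ slot and $\zeta(s)=c_{\k_2}(s)/c_{\k_2}(s_0)$ in the remaining slots, both moving the two endpoints by equal amounts; integrating by parts with $\eta''+4\k_1\eta=0$ and $\zeta''+\k_2\zeta=0$ is precisely what produces the terms $-2T_{4\k_1}(s_0)$ and $-2(m-1)T_{\k_2}(s_0)$ with $T$ evaluated at \emph{half} the distance, matching the 1D model on $[-D/2,D/2]$. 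Your final displayed inequality does have the arguments at $L/2$ and in fact coincides with this two-point estimate, but your preceding lemma does not yield it and would not even feed into it correctly (it would produce a $\cot$-type bound at the full distance $L$). You should replace the ``Laplacian comparison'' step with a direct two-point second-variation estimate and specify the symmetric variation fields; as written the key lemma is false and the step that converts it into the two-point inequality is missing.
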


Instead of lower bounds of Ricci curvature, 
we use lower bounds of holomorphic sectional curvature and orthogonal Ricci curvature (see Section 2 for definitions) in Theorem \ref{thm main}. These curvature conditions are more suitable for K\"ahler manifolds as they reflect more on the K\"ahler structure, particularly for various comparison theorems.
For example, the Laplacian comparison theorem in Riemannian geometry assumes only Ricci lower bounds and are sharp on space forms of constant sectional curvature, while the Laplacian comparison theorem on K\"ahler manifolds that are sharp on complex space forms of constant holomorphic sectional curvature requires lower bounds of both holomorphic sectional curvature and orthogonal Ricci curvature. The latter was established until recently by Ni and Zheng in \cite{NZ18} and was known previously under the stronger condition of bisectional curvature lower bounds by the work of Li and Wang \cite{LW05} (see also \cite{TY12} for Hessian comparison theorem under bisectional curvature lower bounds). 
Since both Theorem \ref{Thm first eigenvalue for Laplacian} and \ref{thm main} can be viewed as eigenvalue comparison theorems, it is natural to work with lower bounds of holomorphic sectional curvature and orthogonal Ricci curvature.

Our proof of Theorem \ref{thm main} uses the modulus of continuity approach introduced by Andrews and Clutterbuck in \cite{AC13}. Lower bounds of the first nonzero eigenvalue are derived as large time implication of the modulus of continuity estimates, which in turn relies on a comparison theorem for the second derivatives of $d(x,y)$ as a function on $M\times M$.  

To the best of our knowledge, Theorem \ref{thm main} is the first lower bound for the first nonzero eigenvalue of the Laplacian on K\"ahler manifolds that depends on the diameter. It also seems to be the first lower bound for the Neumann boundary condition in the K\"ahler setting. 
Moreover, the monotonicty of $\bar{\mu}_1(m,\k_1,\k_2,D)$ in $D$ implies that when the diameter is small, the lower bound provided in Theorem \ref{thm main} is better than any lower bound that is independent of the diameter. 


Expressing $\bar{\mu}_1(m,\k_1,\k_2,D)$ in terms of elementary functions does not seem to be possible, so we provide some explicit bounds and estimates in the next proposition for the purpose of comparing with other results. 
\begin{prop}\label{prop explicit bounds}
Let $\bar{\mu}_1(m,\k_1,\k_2,D)$ be as in Theorem \ref{thm main}. Then we have 
\begin{equation*}
    \bar{\mu}_1(m,\k_1,\k_2,D) 
    \begin{cases} 
    = \frac{\pi^2}{D^2}, & \text{ if } \k_1=\k_2=0;\\
    \geq  \bar{\mu}_1\left(m,\k_1,0, \frac{\pi}{2\sqrt{\k_1}}\right)=8\k_1, & \text{ if } \k_1 >0, \k_2=0; \\
     \geq  \bar{\mu}_1\left(m,0,\k_2, \frac{\pi}{\sqrt{\k_1}}\right)=(2m-1)\k_2,  & \text{ if } \k_1=0, \k_2>0. \\
    \end{cases}
\end{equation*}
\end{prop}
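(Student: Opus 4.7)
The plan is to handle Case 1 by a direct classical computation, and to treat Cases 2 and 3 by combining monotonicity of $\bar{\mu}_1(m,\kappa_1,\kappa_2,D)$ in $D$ with an explicit eigenvalue/eigenfunction pair at the critical diameter. I note what appears to be a typo in Case 3: the expression $\pi/\sqrt{\kappa_1}$ should read $\pi/\sqrt{\kappa_2}$, since $\kappa_1=0$ there. Case 1 is immediate: with $\kappa_1=\kappa_2=0$ the ODE reduces to $\varphi''+\lambda\varphi=0$ on $[-D/2,D/2]$ with Neumann boundary conditions, whose first nontrivial eigenvalue is the classical $\pi^2/D^2$, realized by $\varphi(t)=\sin(\pi t/D)$.

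For the explicit computation at the critical diameter in Case 2, I would test $\varphi(t)=\sin(2\sqrt{\kappa_1}\,t)$ on $[-D/2,D/2]$ with $D=\pi/(2\sqrt{\kappa_1})$. Then $\varphi''=-4\kappa_1\varphi$, and $T_{4\kappa_1}(t)\varphi'(t)=2\sqrt{\kappa_1}\tan(2\sqrt{\kappa_1}\,t)\cdot 2\sqrt{\kappa_1}\cos(2\sqrt{\kappa_1}\,t)=4\kappa_1\sin(2\sqrt{\kappa_1}\,t)=4\kappa_1\varphi$; substituting gives $\varphi''-T_{4\kappa_1}\varphi'=-8\kappa_1\varphi$. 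The Neumann condition $\varphi'(\pm D/2)=2\sqrt{\kappa_1}\cos(\pm\pi/2)=0$ holds, and since $\varphi$ has no interior zero, $\lambda=8\kappa_1$ is the first nontrivial Neumann eigenvalue. Case 3 is parallel: at $D=\pi/\sqrt{\kappa_2}$, testing $\varphi(t)=\sin(\sqrt{\kappa_2}\,t)$ gives $\varphi''=-\kappa_2\varphi$ and $2(m-1)T_{\kappa_2}(t)\varphi'(t)=2(m-1)\kappa_2\varphi$, so $\varphi''-2(m-1)T_{\kappa_2}\varphi'=-(2m-1)\kappa_2\varphi$, with $\varphi'(\pm D/2)=0$, giving $\lambda=(2m-1)\kappa_2$.

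For the monotonicity step, since $p(t):=2(m-1)T_{\kappa_2}(t)+T_{4\kappa_1}(t)$ is odd in $t$, the first nontrivial Neumann eigenfunction on the symmetric interval can be taken odd about $0$ (by Courant nodal domain reasoning combined with the symmetry), so $\bar{\mu}_1(D)$ equals the first eigenvalue of the mixed problem $\varphi(0)=0$, $\varphi'(D/2)=0$ on $[0,D/2]$. Writing the equation in divergence form $(\omega\varphi')'=-\lambda\omega\varphi$ with weight $\omega(t)=\exp(\int_0^t p)>0$, the Rayleigh quotient characterization
\[
\bar{\mu}_1(D)=\inf_{\varphi(0)=0}\frac{\int_0^{D/2}\omega\,|\varphi'|^2\,dt}{\int_0^{D/2}\omega\,\varphi^2\,dt}
\]
yields monotonicity: for $D_1<D_2$ both below the critical value, any admissible $\varphi$ on $[0,D_1/2]$ extends to $[0,D_2/2]$ by the constant value $\varphi(D_1/2)$; the numerator is unchanged while the denominator only grows, so $\bar{\mu}_1(D_2)\leq\bar{\mu}_1(D_1)$. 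Combined with the explicit computations above, this yields the stated lower bounds in Cases 2 and 3.

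The chief technical obstacle is the singularity of $T_\kappa$ at the critical diameter, where the weight $\omega$ ceases to be integrable and the Rayleigh-quotient framework degenerates. I would circumvent this by establishing monotonicity strictly for $D<D_{\max}$ and then passing to the limit $D\nearrow D_{\max}$, exploiting that the sine-type eigenfunction is smooth on the closed interval and that $T_\kappa\varphi'$ extends continuously across the endpoint (since $\varphi'$ vanishes to first order while $T_\kappa$ has only a simple pole).
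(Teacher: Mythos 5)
Your proposal is correct and follows essentially the same route as the paper: an explicit sine eigenfunction computation at the extremal diameter combined with domain monotonicity of the first eigenvalue of the mixed problem $\phi(0)=0$, $\phi'(D/2)=0$ on the half-interval (which is the content of the paper's Lemma \ref{lmb2} plus the standard Rayleigh-quotient extension argument you spell out), and you are right that $\pi/\sqrt{\kappa_1}$ in the third case is a typo for $\pi/\sqrt{\kappa_2}$. The one ingredient the paper supplies that you leave implicit is why $D$ never exceeds the critical value in the first place — Tsukamoto's diameter bound $D\le \pi/(2\sqrt{\kappa_1})$ under $H\ge 4\kappa_1>0$ and the Ni--Zheng bound $D\le \pi/\sqrt{\kappa_2}$ under $\operatorname{Ric}^{\perp}\ge 2(m-1)\kappa_2>0$ — without which the one-dimensional model problem (whose coefficient $T_{4\kappa_1}$, resp.\ $T_{\kappa_2}$, has a pole) would not be defined for all diameters admitted by Theorem \ref{thm main}.
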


Theorem \ref{thm main} is sharp when $\k_1=\k_2=0$. In this case, we have $\mu_1 \geq \bar{\mu}_1=\frac{\pi^2}{D^2}$, giving the same lower bound as in Zhong and Yang \cite{ZY84} for Riemannian manifolds with nonnegative Ricci curvature. The sharpness can be seen by constructing a sequence of K\"ahler manifolds with nonnegative holomorphic sectional curvature and nonnegative orthogonal sectional curvature, which geometrically collapse to the interval $[-D/2,D/2]$. For example, one can take the examples constructed in \cite[Section 5]{AC13} with $\k=0$ for such K\"ahler manifolds (also notice that the examples in \cite{AC13} are not K\"ahler if $\k \neq 0$). 
We suspect that Theorem \ref{thm main} is sharp for $\k_1=\k_2 \neq 0$ as well.

Next let's turn to the first Dirichlet eigenvalue $\l_1$. For convenience, denote by $C_{\kappa, \Lambda}(t)$ the unique solution of the initial value problem
\begin{equation}\label{C def}
    \begin{cases} 
    \phi''+\kappa \phi =0, \\
    \phi(0)=1,     \phi'(0) =-\Lambda,
    \end{cases}
\end{equation}
and define $T_{\kappa,\Lambda}$ for $\kappa, \Lambda \in \R$ by 
\begin{equation}\label{def T kappa Lambda}
    T_{\kappa,\Lambda} (t):=- \frac{C'_{\kappa, \Lambda}(t)}{C_{\kappa, \Lambda}(t)}.
\end{equation}
In the Riemannian setting, the following result is well-known.
\begin{thm}\label{Thm B}
Let $(M^n,g)$ be a compact Riemannian manifold with smooth boundary $\p M \neq \emptyset$. 
Suppose that the Ricci curvature of $M$ is bounded from below by $(n-1)\kappa$ and the mean curvature of $\p M$ is bounded from below by $(n-1)\Lambda$ for some $\kappa, \Lambda \in \R$. 
Let $\l_1$ be the first Dirichlet eigenvalue of the Laplacian on $M$. Then
\begin{align*}
   & \l_{1} \geq \bar\l_1(n,\k,\Lambda,R) , 
\end{align*}
where $\bar\l_1(n,\k,\Lambda,R)$
is the first eigenvalue of the one-dimensional eigenvalue problem 
\begin{equation}\label{eq 1.4}
    \begin{cases}
   \vp'' -(n-1)T_{\kappa, \Lambda} \vp' =-\l\vp, \\
    \vp(0)=0,     \vp'(R)=0.
    \end{cases}
\end{equation}
Here $R$ denotes the inradius of $M$ defined by
\begin{equation*}
    R=\sup \{d(x,\p M) :x \in \p M \}.
\end{equation*}
\end{thm}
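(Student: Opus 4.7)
The natural strategy is to compare the first Dirichlet eigenvalue on $(M,g)$ with that of the one-dimensional model \eqref{eq 1.4} by transplanting the one-dimensional eigenfunction via the distance-to-boundary function $d(x):=\dist(x,\p M)$. The first step is to invoke Kasue's Laplacian comparison theorem: under the assumptions $\Ric\geq (n-1)\k$ and mean curvature of $\p M$ at least $(n-1)\Lambda$, one has
\begin{equation*}
\Delta d\leq -(n-1)\, T_{\k,\Lambda}(d)
\end{equation*}
pointwise wherever $d$ is smooth, and in the distributional (or barrier) sense on all of $M$. This is derived by integrating the matrix Riccati inequality for the shape operator of the equidistant hypersurfaces $\{d=t\}$ outward from $\p M$, with the initial data at $t=0$ supplied by the second fundamental form of $\p M$ and the evolution controlled by the Ricci lower bound.

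Next, let $\bar\vp$ be the first eigenfunction of the one-dimensional problem \eqref{eq 1.4}, normalized so that $\bar\vp>0$ on $(0,R]$. Standard Sturm--Liouville theory gives $\bar\vp'(0)>0$, $\bar\vp'(R)=0$, and $\bar\vp'\geq 0$ throughout $[0,R]$. Setting $v(x):=\bar\vp(d(x))$, we have $v>0$ in $M^\circ$ and $v=0$ on $\p M$. At every smooth point of $d$, using $|\n d|=1$ together with the comparison and $\bar\vp'\geq 0$,
\begin{equation*}
\Delta v=\bar\vp''(d)+\bar\vp'(d)\,\Delta d \leq \bar\vp''(d)-(n-1)\,T_{\k,\Lambda}(d)\,\bar\vp'(d)=-\bar\l_1 v.
\end{equation*}
Hence $v$ is a positive supersolution of the eigenvalue equation on $M$ with vanishing boundary values. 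Applying Barta's inequality (or equivalently Picone's identity with a positive first Dirichlet eigenfunction $u_1$) then gives $\l_1\geq \inf_{M^\circ}(-\Delta v/v)\geq \bar\l_1$. Concretely, at an interior maximum of $u_1/v$ the expansion of $\Delta(u_1/v)\leq 0$ together with $\n(u_1/v)=0$ forces the quotient $-\Delta v/v$ to bound $\l_1$ from below at that point.

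The main technical obstacle is the non-smoothness of $d(\cdot,\p M)$ at its cut locus inside $M$. This is handled by Calabi's standard barrier trick: near any cut point $x_0$, a smooth local upper barrier for $d$ is constructed by slightly displacing a foot point on $\p M$, and the previous differential inequality is verified for the barrier; since $x_0$ is arbitrary, both the Barta and the Picone formulations still go through. Equivalently one may read $\Delta v\leq -\bar\l_1 v$ in the distributional sense, as the cut locus contributes only a nonpositive singular measure to $\Delta d$, allowing direct integration against $u_1$. As an alternative route, one could adapt the modulus-of-continuity heat flow approach of \cite{AC13}: compare the Dirichlet heat flow on $M$ with the one-dimensional heat flow on $[0,R]$ carrying Dirichlet data at $0$ and Neumann data at $R$, verify that the pointwise domination is preserved in time via a touching-point computation using the same Laplacian comparison, and let $t\to\infty$ to isolate $\l_1\geq \bar\l_1$.
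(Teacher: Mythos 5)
Your proposal is correct and follows exactly the route the paper takes: the paper attributes Theorem \ref{Thm B} to Kasue and does not reprove it, but its proof of the K\"ahler analogue (Theorem \ref{Thm C}) is precisely your argument --- a Laplacian comparison for $d(\cdot,\p M)$ established via an upper barrier at cut points (Calabi's trick, yielding a viscosity/distributional supersolution), transplantation of the monotone one-dimensional eigenfunction as $v=\bar\vp(d)$, and Barta's inequality. Your sign conventions and the monotonicity claim $\bar\vp'\geq 0$ both check out against the paper's Lemma \ref{lmvp} and the definition of $T_{\k,\Lambda}$.
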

Regarding Theorem \ref{Thm B}, the special case $\k=\Lambda=0$ is due to Li and Yau \cite{LY80} and the general case is obtained by Kause \cite{Kasue84}.

Our second main result is a K\"ahler counterpart of the above theorem.
\begin{thm}\label{Thm C}
Let $(M^m,g,J)$ be a compact K\"ahler manifold with smooth nonempty boundary $\p M$. 
Suppose that the holomorphic sectional curvature is bounded from below by $4\k_1$ and orthogonal Ricci curvature is bounded from below by $2(m-1)\k_2$ for some $\k_1,\k_2 \in \R$, and the second fundamental form on $\p M$ is bounded from below by $\Lambda \in \R$. Let $\l_1$ be the first Dirichlet eigenvalue of the Laplacian on $M$. Then 
$$\l_1\ge\bar{\l}_1(m, \k_1,\k_2, \Lambda, R)$$
where $\bar{\l}_1(m, \k_1,\k_2, \Lambda, R)$ is the first eigenvalue of the one-dimensional eigenvalue problem
\begin{equation}\label{eq 1.5}
    \begin{cases}
   \vp''-\left(2(m-1)T_{\k_2, \Lambda}+T_{4\k_1, \Lambda} \right)\vp'  =-\l\vp, \\
    \vp(0)=0,     \vp'(R)=0.
    \end{cases}
\end{equation}
\end{thm}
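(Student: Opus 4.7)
The plan is to reduce Theorem \ref{Thm C} to a K\"ahler Laplacian comparison for the distance-to-boundary function $\rho(x):=d(x,\partial M)$, and then to apply a Barta-type test-function argument. This mirrors the strategy Kasue \cite{Kasue84} used in the Riemannian case, with the only new ingredient being a sharp K\"ahler Laplacian comparison that takes advantage of the complex structure.

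For the Laplacian comparison, fix a smooth point $x$ of $\rho$ and let $\gamma:[0,\rho(x)]\to M$ be a unit-speed minimizing geodesic from $\partial M$ to $x$, orthogonal to $\partial M$ at $\gamma(0)$. Choose a parallel orthonormal frame $\dot\gamma,J\dot\gamma,e_1,Je_1,\dots,e_{m-1},Je_{m-1}$ along $\gamma$ so that $\{\dot\gamma,J\dot\gamma\}$ spans the complex line tangent to $\gamma$. For each of the $2m-1$ frame vectors $e\ne\dot\gamma$, construct a test variation $f_e(t)\,e(t)$ with $f_e$ solving
\begin{equation*}
f_e''+\kappa_e f_e=0,\qquad f_e(0)=1,\qquad f_e'(0)=-\Lambda,
\end{equation*}
where $\kappa_e=4\kappa_1$ in the direction $J\dot\gamma$ and $\kappa_e=\kappa_2$ in the remaining $2m-2$ directions. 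The second variation of arclength from $\partial M$ (whose initial slope $-\Lambda$ reflects the second fundamental form bound) together with the holomorphic sectional curvature bound in the $J\dot\gamma$ slot and the orthogonal Ricci bound across the remaining slots yields
\begin{equation*}
\Delta\rho(x)\le -\bigl(2(m-1)T_{\kappa_2,\Lambda}(\rho(x))+T_{4\kappa_1,\Lambda}(\rho(x))\bigr),
\end{equation*}
with the inequality extending across the cut locus in the barrier sense.

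For the Barta step, let $\bar\varphi$ be the first eigenfunction of \eqref{eq 1.5} on $[0,R]$, normalized to be positive on $(0,R]$. A standard Sturm-Liouville comparison with shorter subintervals shows $\bar\varphi'\ge 0$ on $[0,R]$. Set $v(x):=\bar\varphi(\rho(x))$, which vanishes on $\partial M$ and is positive in the interior. Substituting the ODE for $\bar\varphi$ into $\Delta v=\bar\varphi''(\rho)+\bar\varphi'(\rho)\Delta\rho$ and using the Laplacian comparison together with $\bar\varphi'\ge 0$ yields $-\Delta v\ge\bar\lambda_1 v$ at smooth points, and distributionally on $M$. Letting $u>0$ be the first Dirichlet eigenfunction with eigenvalue $\lambda_1$, Green's identity (boundary terms vanish as $u=v=0$ on $\partial M$) gives
\begin{equation*}
\lambda_1\int_M uv=\int_M(-\Delta u)v=\int_M u(-\Delta v)\ge\bar\lambda_1\int_M uv,
\end{equation*}
and since $\int_M uv>0$ we conclude $\lambda_1\ge\bar\lambda_1$.

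The main obstacle is the sharp K\"ahler Laplacian comparison for the boundary distance function. Ni-Zheng \cite{NZ18} established the analogue for distance to a point, and Li-Wang \cite{LW05} handled distance to a hypersurface under the stronger hypothesis of bisectional curvature bounds. The new technical content here is to combine the Ni-Zheng decomposition of parallel vector fields into the complex line and its orthogonal complement with the boundary initial data encoded by the shape operator of $\partial M$, so that each one-dimensional comparison Jacobi field solves an initial-value problem with slope $-\Lambda$ at $t=0$ rather than the singular initial data at a pole.
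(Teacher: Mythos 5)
Your proposal is correct and follows the same strategy as the paper: establish a K\"ahler Laplacian comparison for $d(\cdot,\partial M)$ by splitting the second variation into the $J\dot\gamma$ direction (using the holomorphic sectional curvature bound) and the $2m-2$ complementary directions (using the orthogonal Ricci bound), with boundary slope $-\Lambda$ from the second fundamental form, then conclude via Barta's inequality applied to $v=\bar\varphi(d(\cdot,\partial M))$. The paper's Theorem~\ref{Thm comparison distance to boundary} packages the comparison step in the viscosity/barrier framework (with the field normalized as $C_{\kappa,\Lambda}(s_0-s)/C_{\kappa,\Lambda}(s_0)$ so it is unit at the interior point, a normalization you elide but which cancels in $T_{\kappa,\Lambda}=-C'/C$), and is stated for the general quasilinear operator $Q$, though for Theorem~\ref{Thm C} only the $\alpha=\beta=1$ case is used.
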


When the boundary is convex, namely $\Lambda=0$, it is easily seen that we have 
\begin{equation*}
  \bar{\l}_1(m,\k_1,\k_2,0,R) =\bar{\mu}_1(m,\k_1,\k_2,R).
\end{equation*}
Thus we can obtain the same explicit lower bounds and estimates as in Proposition \ref{prop explicit bounds} for $\bar{\l}_1(m,\k_1,\k_2,0,R)$.

The proof of Theorem \ref{Thm C} is similar to that of Theorem \ref{Thm B} given in \cite{Kasue84} and it relies on a comparison theorem for the second derivatives of $d(x,\p M)$ and Barta's inequality. 
With the help of a generalized Barta's inequality for the $p$-Laplacian (see \cite[Theorem 3.1]{LW20a}), the same argument indeed yields such lower bounds for the first Dirichlet eigenvalue of the $p$-Laplacian for all $1<p<\infty$.

The paper is organized as follows. In Section 2, we recall the definitions of holomorphic sectional curvature and orthogonal Ricci curvature for K\"ahler manifolds. Section 3 is devoted to proving the modulus of continuity estimate for a large class of quasilinear parabolic equations, in terms of dimension, diameter and curvature lower bounds. We prove Theorem \ref{thm main} in Section 4, as large time implication of the modulus of continuity estimates. 
Explicit lower bounds and estimates in Proposition \ref{prop explicit bounds} will be proved in Section 5. 
In Section 6, we prove a comparison theorem for the second derivatives of $d(x,\p M)$, which will be used to prove Theorem \ref{Thm C} in Section 7.



\section{Curvatures of K\"ahler Manifolds}
In this section, we briefly recall the notions holomorphic sectional curvature and orthogonal Ricci curvature for K\"ahler manifolds. 

Let $(M^m,g, J)$ be a K\"ahler manifold of complex dimension $m$ (real dimension is $n=2m$).
We regard $M$ as a $2n$-dimensional Riemannian manifold with a parallel complex structure $J$. A plane $\sigma \subset T_pM$ is said to be holomorphic if it is invariant by the complex structure tensor $J$. The restriction of the sectional curvature to holomorphic planes is called the holomorphic sectional curvature and will be denoted by $H$. In other words,
if $\sigma$ is a holomophic plane spanned by $X$ and $JX$, then the holomorphic sectional curvature of $\sigma$ is defined by 
\begin{equation*}
    H(\sigma) := H(X)=\frac{R(X,JX,X,JX)}{|X|^4}.
\end{equation*}
We say the holomorphic sectional curvature is bounded from below by $\k \in \R$ (abbreviated as $H\geq \k$) if 
$H(\sigma) \geq \kappa$ for all holomorphic planes $\sigma \subset T_pM$ and all $p\in M$.  


The orthogonal Ricci curvature, denoted by $\Ric^\perp$, is defined for any $X\in T_pM$ by
\begin{equation*}
    \Ric^{\perp}(X,X) =\Ric(X,X)-H(X)|X|^2.
\end{equation*}
We say the orthogonal Ricci curvature is bounded from below by $\k \in \R$ (abbreviated as $\Ric^\perp \geq \k$) if $\Ric^{\perp}(X,X) \geq \k |X|^2$ for all $X\in T_pM$ and all $p\in M$.
This new curvature $\Ric^{\perp}$ was recently introduced by Ni and Zheng \cite{NZ18} in the study of Laplace comparison theorems on K\"ahler manifolds.  We refer the readers to \cite{NZ18}\cite{NZ19}\cite{NWZ18} for a more detailed account on $\Ric^{\perp}$ and recent developments.

\section{Modulus of Continuity Estimates on K\"ahler Manifolds}

In the section, we derive modulus of continuity estimates for a large class of quasilinear parabolic equations on K\"ahler manifolds, in terms of initial oscillation, elapsed time, and lower bounds of holomorphic sectional curvature and orthogonal Ricci curvature. One should compare it with its Riemannian version derived by Andrews and Clutterbuck in \cite{AC13}. 

Recall that the modulus of continuity $\w$ of a continuous function $u$ defined on a metric space $(X,d)$ is defined by 
\begin{equation*}
   \w(s):= \sup \left\{ \frac{u(y)-u(x)}{2} : d(x,y)=2s \right\}.
\end{equation*} 
In a series of papers \cite{AC09a, AC09, AC11, AC13, Andrewssurvey15}, Andrews and Clutterbuck investigated the question of how the modulus of continuity $\w$ of $u$ evolves when $u$ is evolving by a parabolic partial differential equation on a bounded domain in the Euclidean space or on a compact Riemannian manifold. They managed to prove that 
for a large class of quasi-linear parabolic equations (see \eqref{eq isotropic} below), the modulus of continuity of a solution is a subsolution of the associated one-dimensional equations.  This in particular allows them to proved in \cite{AC09} gradients estimates for the graphical solutions of smoothly anisotropic mean curvature flows depending only the initial oscillation, whereas this has not yet been accomplished using direct estimates on the gradient. However, it has turned out that the most important applications of the modulus of continuity estimates are the large time implications. For instance, the modulus of continuity estimate leads directly to exponential decay rate for the oscillation of solutions of the heat equation, thus implying a lower bound on the first nonzero eigenvalue of the Laplacian. 
This observation was used in \cite{AC13} (see also \cite{WZ17} for an elliptic proof) to provide a simple alternative proof of the sharp lower bound for the first eigenvalue of the Laplacian in terms of dimension, diameter and lower bound of Ricci curvature (see Theorem \ref{Thm first eigenvalue for Laplacian}). 
More surprisingly, this idea was used in \cite{AC11} to derive an optimal lower bound on the difference between the first two Dirichlet eigenvalues for the Laplacian on a convex Euclidean domain, thus
proving the long-standing Fundamental Gap Conjecture (see also \cite{Ni13} for an elliptic proof and \cite{HWZ20}\cite{DSW18}\cite{SWW19} for fundamental gap of convex domains in the sphere). In another direction, the modulus of continuity estimate has been extended to viscosity solutions in \cite{Li16}\cite{LW17}, and to fully nonlinear parabolic equations in \cite{Li20}. 
We refer the read to the the survey by Andrews \cite{Andrewssurvey15}, where these ideas were further explained and connections to other problems in geometric analysis are made.

As in \cite{AC13} and \cite{Andrewssurvey15}, we consider the following quasilinear isotropic equation
\begin{equation}\label{eq isotropic}
\frac{\p u}{\p t} =Q[u]:=\left[\a(|
\n u|) \frac{\n_iu \n_j u}{|\n u|^2} +\b(|\n u|)\left(\delta_{ij}-\frac{\n_i u \n_j u}{|\n u|^2}\right) \right].   
\end{equation}
Here $\a$ and $\b$ are smooth positive functions. Some important examples of \eqref{eq isotropic} are the heat equation (with $\a=\b=1$) and the $p$-Laplacian heat flows (with $\a=(p-1)|\n u|^{p-2}$ and $\b=|\n u|^{p-2}$) and the graphical mean curvature flow (with $\a=1/(1+|\n u|^2)$ and $\b=1$). 

To the operator $Q$ defined in \eqref{eq isotropic}, the associated one-dimensional operator $\L$ is given by \begin{equation}\label{eq 1D}
    \L \vp =\a(\vp')\vp'' -\left( 2(m-1)T_{\k_2}+T_{4\k_1}\right)\b(\vp')\vp', 
\end{equation}
where $T_{\k}$ is defined by  \eqref{def T}.


The main result of this section is the following modulus of continuity estimates on K\"ahler manifolds. 





\begin{thm}\label{thm MC}
Let $(M^m, g, J)$ be a compact K\"ahler manifold with diameter $D$ whose holomorphic sectional section is bounded from below by $4\k_1$ and the orthogonal Ricci curvature is bounded from below by $2(m-1)\k_2$ for some $\k_1, \k_2 \in \R$.  
Let $u : M \times [0,T) \to \R$ be a solution of \eqref{eq isotropic} (with Neumann boundary condition if $M$ has a strictly convex boundary). Then the modulus of continuity $\omega:[0,D/2] \times [0,T) \to \R$ of $u$ is a viscosity subsolution of the one-dimensional equation
\begin{equation}\label{ODE K}
    \w_t =\L \w,
\end{equation}
where $\L$ is defined in \eqref{eq 1D}. 
\end{thm}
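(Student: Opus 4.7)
The plan is to adapt the Andrews--Clutterbuck two-point maximum principle \cite{AC13} to the K\"ahler setting, the crucial geometric input being a Jacobi-field comparison in the spirit of Ni--Zheng \cite{NZ18} that splits transverse variations into the distinguished $J\gamma'$ direction (controlled by $H\ge 4\k_1$) and the $(2m-2)$-dimensional complement (controlled by $\Ric^\perp\ge 2(m-1)\k_2$). I argue by contradiction: if $\w$ fails to be a viscosity subsolution of \eqref{ODE K} at some $(s_0,t_0)\in(0,D/2)\times(0,T)$, a standard perturbation produces a smooth strict supersolution $\vp$ of \eqref{ODE K} with $\vp\ge\w$ on $[0,D/2]\times[0,t_0]$, equality only at $(s_0,t_0)$, and $\vp'(s_0,t_0)>0$. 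Select $x_0\ne y_0\in M$ with $d(x_0,y_0)=2s_0$ and $u(y_0)-u(x_0)=2\vp(s_0,t_0)$, and form the two-point function
\[
 Z(x,y,t):=u(y)-u(x)-2\vp\lf(\tfrac12 d(x,y),\,t\ri),
\]
which is $\le 0$ on $M\times M\times[0,t_0]$ with equality at $(x_0,y_0,t_0)$. Let $\gamma:[0,2s_0]\to M$ be a unit-speed minimising geodesic from $x_0$ to $y_0$.

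Away from the cut locus (arranged by Calabi's upper-barrier trick), and in the interior (strict convexity of $\p M$ together with the Neumann condition excludes boundary maxima, as in \cite{AC13}), the first-order conditions at the spacetime maximum of $Z$ read
\[
 \n u(y_0)=\vp'(s_0,t_0)\,\gamma'(2s_0),\qquad \n u(x_0)=\vp'(s_0,t_0)\,\gamma'(0),
\]
so $\n u/|\n u|=\gamma'$ at both endpoints and $|\n u(x_0)|=|\n u(y_0)|=\vp'(s_0,t_0)$; in particular, the $\a$-direction in $Q[u]$ coincides with $e_0:=\gamma'$. The time inequality gives $u_t(y_0)-u_t(x_0)\ge 2\vp_t(s_0,t_0)$. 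Extend $e_0$ to a parallel orthonormal frame $\{e_0,e_1,\dots,e_{2m-1}\}$ along $\gamma$ adapted to $J$, with $e_1=Je_0$ and $e_2,\dots,e_{2m-1}\in\{\gamma',J\gamma'\}^\perp$.

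For each $i$, apply $(d^2/d\eta^2)|_0 Z\le 0$ to $\eta\mapsto Z(\exp_{x_0}(\eta e_i(0)),\exp_{y_0}(\pm\eta e_i(2s_0)),t_0)$, with the sign chosen so that the endpoints move tangentially in opposite senses for $i=0$ and perpendicularly for $i\ge 1$. Writing $L_i(\eta)$ for the distance between the endpoint curves, this yields
\begin{align*}
  i=0:&\quad \text{Hess}\,u(y_0)(e_0,e_0)-\text{Hess}\,u(x_0)(e_0,e_0)\le 2\vp''(s_0,t_0),\\
  i\ge 1:&\quad \text{Hess}\,u(y_0)(e_i,e_i)-\text{Hess}\,u(x_0)(e_i,e_i)\le \vp'(s_0,t_0)\,L_i''(0).
\end{align*}
The index lemma applied with the trial field $f_i(t)e_i(t)$, $f_i(0)=f_i(2s_0)=1$, $f_i''+K_if_i=0$, yields $L_i''(0)\le\int_0^{2s_0}(f_i'^2-f_i^2 R(e_i,\gamma',e_i,\gamma'))\,dt$ and, after integration by parts, $\int(f_i'^2-K_if_i^2)\,dt=-2T_{K_i}(s_0)$. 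For $i=1$ take $K_1=4\k_1$ and use $R(e_1,\gamma',e_1,\gamma')=H(\gamma')\ge 4\k_1$ to obtain $L_1''(0)\le -2T_{4\k_1}(s_0)$; for $i\ge 2$ take $K_i=\k_2$, sum the index inequalities, and use $\sum_{i=2}^{2m-1}R(e_i,\gamma',e_i,\gamma')=\Ric^\perp(\gamma',\gamma')\ge 2(m-1)\k_2$ to obtain $\sum_{i\ge 2}L_i''(0)\le -4(m-1)T_{\k_2}(s_0)$.

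Finally, decomposing $Q[u]=\a(|\n u|)\,\text{Hess}\,u(e_0,e_0)+\b(|\n u|)\sum_{i\ge 1}\text{Hess}\,u(e_i,e_i)$ at $x_0$ and $y_0$, subtracting, and inserting the above bounds (using $\vp'>0$) gives
\[
 2\vp_t(s_0,t_0)\le u_t(y_0)-u_t(x_0)\le 2\a(\vp')\vp''-2\b(\vp')\vp'\bigl(2(m-1)T_{\k_2}+T_{4\k_1}\bigr)=2\L\vp(s_0,t_0),
\]
contradicting strictness. The principal obstacle is the Jacobi-field step in the K\"ahler setting: on the distinguished line through $J\gamma'$ one uses the pointwise bound $H(\gamma')\ge 4\k_1$ directly, whereas on the $(2m-2)$-dimensional complement only the trace inequality $\Ric^\perp\ge 2(m-1)\k_2$ is available, so a common trial function must be used in all these directions and the comparison invoked only after summation. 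Secondary technicalities --- handling the cut locus by Calabi's trick and the strictly convex Neumann boundary as in \cite{AC13} --- are routine.
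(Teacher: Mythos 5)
Your argument follows the Andrews--Clutterbuck two-point maximum principle exactly as the paper does, with the same decomposition of the transverse variations into the $J\gamma'$ direction (controlled by $H\ge 4\k_1$ with comparison function solving $f''+4\k_1 f=0$) and the remaining $(2m-2)$-dimensional complement (controlled after summation by $\Ric^\perp\ge 2(m-1)\k_2$ with a common comparison function solving $f''+\k_2 f=0$), and the same treatment of the tangential second variation and the strictly convex Neumann boundary. The only differences are cosmetic: you parametrize the geodesic on $[0,2s_0]$ rather than $[-s_0,s_0]$, phrase the index-lemma step in terms of $L_i''(0)$ rather than writing out the second variation of arclength directly, and fold the strict-supersolution perturbation into the theorem's proof rather than isolating it in a corollary as the paper does.
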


\begin{proof}[Proof of Theorem \ref{thm MC}]
The proof is in the same spirit as in \cite{AC13}, but differs from the choices of variations due to K\"ahlerity and different assumptions on the curvatures. 
It will be convenient to use the function $c_\k$ defined by
\begin{equation}\label{def c kappa}
    c_\k(t) =\begin{cases} 
    \cos \sqrt{\k} t & \text{ if } \k >0, \\
    1 & \text{ if } \k =0,\\
    \cosh \sqrt{-\k t} & \text{ if } \k <0.
    \end{cases}
\end{equation}
By the definition of viscosity solutions (see \cite{CIL92}), we need to show that for every smooth function $\vp$ that touches $\w$ from above at $(s_0,t_0) \in (0,D/2)\times (0,T)$ in the sense that 
\begin{equation*}
    \begin{cases}
    \vp (s,t)  \geq  \w(s,t)  \text{  near } (s_0,t_0),\\ 
    \vp(s_0,t_0) =  \w(s_0,t_0), 
    \end{cases}
\end{equation*}
it holds that  
\begin{equation}\label{eq2.0}
\vp_t \leq \L  \vp, 
\end{equation}
at the point $(s_0,t_0)$. 
It follows from the definition of $\w$ that for such a function $\vp$, we have
\begin{equation} \label{eq2.1}
    u\left(\gamma(1),t \right)- u\left( \gamma(0),t \right) -2 \vp \left( \frac{L[\gamma]}{2}, t \right) \leq 0
\end{equation}
for any $t \leq t_0$ close to $t_0$ and any smooth path $\gamma: [a,b] \to M$ with length close to $2s_0$. Moreover, since $M$ is compact, there exist points $x_0$ and $y_0$ in $M$ (assume for a moment that $\p M = \emptyset$), with $d(x_0,y_0)=2s_0$ such that the equality in \eqref{eq2.1} holds for $\gamma_0:[-s_0,s_0] \to M$, a length-minimizing unit speed geodesic connecting $x_0$ and $y_0$.
The key idea is to derive useful inequalities from the first and second tests along smooth family of variations of the curve $\gamma_0$. 
For this purpose, we need to recall the first and second variation formulas of arc length.  If $\gamma:(r,s) \to \gamma_r(s)$ is a smooth variation of $\gamma_0(s)$, then we have 
\begin{equation*}\label{eq 1st variation}
   \left. \p_rL[\gamma_r]\right|_{r=0} = \left.g(T, \gamma_r) \right|_{-s_0}^{s_0}, 
\end{equation*}
and 
\begin{equation*}\label{eq 2nd variation}
    \left. \p_r^2 L[\gamma_r]\right|_{r=0} = \int_{-s_0}^{s_0} \left(|(\n_s \gamma_r)^\perp|^2 -R(\gamma_s,\gamma_r,\gamma_s,\gamma_r)\right)ds +\left. g(T,\n_r \gamma_r)\right|_{-s_0}^{s_0},
\end{equation*}
where $T$ is the unit tangent vector to $\gamma_0$. 

It will be convenient to work in the (complex) Fermi coordinates along $\gamma_0$ chosen as follows. We pick an orthonormal basis $\{e_i\}_{i=1}^{2m}$ for $T_{x_0}M$ with $e_1=\gamma_0'(-s_0)$ and $e_2 =J \gamma_0'(-s_0)$, where $J$ is the complex structure. 
Then parallel transport along $\gamma_0$ produces an orthonormal basis $\{e_i(s)\}_{i=1}^{2m}$ for $T_{\gamma_0(s)}M$ with $e_1(s)=\gamma_0'(s)$ for each $s\in [-s_0,s_0]$. Since $J$ is parallel and $\gamma_0$ is a geodesic, the vector field $J\gamma_0'(s)$ is parallel and thus we have $e_2(s) =J e_1(s)$ for each $s\in [-s_0, s_0]$. 

First derivatives consideration implies that 
\begin{equation}\label{eq2.2}
    Q[u](y_0,t_0) -Q[u](x_0,t_0) -2\vp_t \geq 0,
\end{equation}
and
\begin{align}\label{eq2.3}
    \n u(x_0,t_0)  =-\vp'e_1(-s_0), \text{\quad}
    \n u(y_0,t_0) =\vp'e_1(s_0).
\end{align}
Here and below, all derivatives of $\vp$ are evaluated at $(s_0,t_0)$. 
The variation $\gamma(r,s)=\gamma_0\left(s+r\frac{2s-1}{L} \right)$ has $\gamma_r= \frac{2s-1}{L}\gamma_0'$, and thus $\p_r L[\gamma]=2$, and $\p_r^2 L[\gamma] =0$. The second derivative test for this variation produces 
\begin{equation}\label{eq2.6}
    u_{11}(y_0,t_0)-u_{11}(x_0,t_0) - 2\vp'' \leq 0.
\end{equation}
Here the subscripts denote covariant derivatives in directions corresponding to the orthonormal basis $\{e_i\}$.
Next, we consider the variation $\gamma(r,s)=\exp_{\gamma_0(s)}\left(r\eta(s)e_2(s) \right)$ for some smooth function $\eta$ to be determined. One easily computes that $\gamma_r=\eta(s)e_2(s)$, $\p_r L[\gamma]=0$, and 
\begin{equation*}
    \p_r^2 L[\gamma] = \int_{-s_0}^{s_0} \left((\eta')^2 - \eta^2 R(e_1, e_2,e_1,e_2) \right)ds .
\end{equation*}
So this variation gives that 
\begin{equation}\label{eq2.7}
    u_{22}(y_0,t_0) -u_{22}(x_0,t_0) - \vp' \int_{-s_0}^{s_0} \left((\eta')^2 - \eta^2 R(e_1, e_2,e_1,e_2) \right)ds \leq 0.
\end{equation}
Similarly, for $3\leq i\leq 2m$, the variation $\gamma(r,s)=\exp_{\gamma_0(s)}\left(r\zeta(s)e_i(s) \right) $ with $\zeta$ to be decided, produces
\begin{equation}\label{eq2.8}
     u_{ii}(y_0,t_0)-u_{ii}(x_0,t_0) - \vp' \int_{-s_0}^{s_0} \left((\zeta')^2 - \zeta^2 R(e_1, e_i,e_1,e_i) \right)ds \leq 0.
\end{equation}
Combining \eqref{eq2.6}, \eqref{eq2.7}, \eqref{eq2.8} together and in view of \eqref{eq2.3}, we obtain that
\begin{eqnarray} \label{eq2.9}
&& Q[u](y_0,t_0)-Q[u](x_0,t_0)  \\
 &=& \a(\vp')\left(u_{11}(y_0,t_0)-u_{11}(x_0,t_0) \right)+\b(\vp')\sum_{i=2}^{2m}\left(u_{ii}(y_0,t_0) -u_{ii}(x_0,t_0) \right) \nonumber \\
&\leq& 2\a(\vp')\vp'' +\b(\vp') \vp' \int_{-s_0}^{s_0}  \left((\eta')^2 - \eta^2 R(e_1, e_2,e_1,e_2) \right)ds \nonumber \\
&&+\b(\vp') \vp' \sum_{i=3}^{2m}\int_{-s_0}^{s_0} \left((\zeta')^2 - \zeta^2 R(e_1, e_i,e_1,e_i) \right)ds \nonumber 
\end{eqnarray}
Choosing $\eta(s)=\frac{c_{4\k}(s)}{c_{4\k}(s_0)}$ yields that 
\begin{eqnarray}\label{eq2.10}
    &&\int_{-s_0}^{s_0} \left((\eta')^2 - \eta^2 R(e_1, e_2,e_1,e_2) \right)ds \\
&=& \left.\eta \eta' \right|_{-s_0}^{s_0} -\int_{-s_0}^{s_0} \eta^2 \left(R(e_1, Je_1,e_1,Je_1) -4\k \right)ds \nonumber\\
&\leq & -2T_{4\k}(s_0),\nonumber
\end{eqnarray}
where we have used $\eta''+4\k \eta =0$ and the assumption $H\geq 4\k$. 
Similarly, we obtain with  $\zeta(s)=\frac{c_{\k}(s)}{c_{\k}(s_0)}$ that 
\begin{eqnarray}\label{eq2.11}
&& \sum_{i=3}^{2m} \int_{-s_0}^{s_0} \left((\zeta')^2 - \zeta^2 R(e_1, e_i,e_1,e_i) \right)ds \\ \nonumber
&=&  \int_{-s_0}^{s_0} \left(2(m-1)(\zeta')^2  -  \zeta^2 \Ric^\perp(e_1, e_1)\right) ds \\ \nonumber
&=& 2(m-1)\left.\zeta \zeta' \right|_{-s_0}^{s_0}  +\int_{-s_0}^{s_0} \zeta^2 \left(\Ric^\perp(e_1, e_1) -2(m-1)\k\right)ds \\
&\leq & -2(m-1)T_{\k}(s_0),\nonumber
\end{eqnarray}
where we have used $\zeta''+\k \zeta =0$ and $\Ric^\perp \geq 2(m-1)\k$. 
Inserting the above two estimates \eqref{eq2.10} and \eqref{eq2.11} into \eqref{eq2.9} gives 
\begin{equation}\label{eq2.12}
    Q[u](y_0,t_0)-Q[u](x_0,t_0) \leq 2 \L \vp.
\end{equation}
The desire inequality \eqref{eq2.0} follows immediately by combining \eqref{eq2.2} and \eqref{eq2.12}, thus completing the proof if $\p M =\emptyset$. 

Finally, let's deal with the situation that $M$ has a strictly convex boundary. 
Let $x_0$ and $y_0$ be such that the function 
$$Z(x,y)=u(y)-u(x)-2\vp\left(\frac{d(x,y)}{2}\right)$$
attains its maximum zero at $(x_0,y_0)$. 
We will rule out the possibility that either $x_0 \in \p M$ or $y_0 \in \p M$. Without loss of generality, we may assume $x_0 \in \p M$. Since $\p M$ is convex, there exists (see \cite{BGS02}) a length-minimizing geodesic $\gamma:[-s_0, s_0] \to M$ from $x_0$ to $y_0$ such that $\gamma(s)$ lies in the interior of $M$ for all $s \in (-s_0,s_0)$ and $g(\gamma'(-s_0), \nu(x_0))>0$, with $\nu(x_0)$ being the inward-pointing unit normal to $\p M$ at $x_0$. Then for $x(s)=\exp_{x_0}(s\nu(x_0))$, we have 
\begin{eqnarray*}
\frac{d}{ds} Z\left( x(s), y_0 \right) 
&=& -g(\n u, \nu(x_0) ) -\vp'(s_0) g(-\gamma'(-s_0), \nu(x_0)) \\ 
&=& \vp'(s_0) g(\gamma'(-s_0), \nu(x_0)) > 0. 
\end{eqnarray*}
This contradicts the fact that the function $Z(x,y)$ attains its maximum zero over $M\times M$ at $(x_0,y_0)$.
The proof is thus complete.  

\end{proof}

\begin{remark}
Theorem \ref{thm MC} remains valid if $u$ is only assumed to be a viscosity solution of \eqref{eq isotropic}. 
This will not be needed in the present paper and we refer the interested reader to \cite{Li16} and \cite{LW17} for how to prove modulus of continuity estimates for viscosity solutions. 
\end{remark}

\begin{remark}
One may follow the approach in \cite{Li20} to derive modulus of continuity estimates for fully nonlinear parabolic equations on K\"ahler manifolds. 
\end{remark}

\section{Lower Bounds for the First Nonzero Eigenvalue}

As in the Riemannian setting in \cite{AC13}, the large time behavior of the modulus of continuity estimates implies lower bound for the first nonzero eigenvalue of the Laplacian. The goal of this section is to prove lower bounds for the first nonzero eigenvalue of the Laplacian on a K\"ahler manifold stated in Theorem \ref{thm main}, as an application of the modulus of continuity derived in the previous section. 

Let's explain the approach of Andrews and Clutterbuck \cite{AC13} in more details. The idea to detect the first nonzero eigenvalue (with either $\p M =\emptyset$ or Neumann boundary condition) of the Laplacian is by knowing how quickly the solutions to the heat equation decay. 
This is because we may solve 
\begin{equation*}
    \begin{cases}
    u_t =\Delta u, &\\
    u(x,0) =u_0(x), &
    \end{cases}
\end{equation*}
by expanding $u_0 =\sum_{i=0}^\infty a_i \vp_i$, where $\vp_i$ are eigenfunctions of the Laplacian (with Neumann boundary condition if $\p M \neq \emptyset$). Then the solution to the heat equation is given by 
$$u(x,t) =\sum_{i=0}^\infty e^{-\mu_i t} a_i \vp_i.$$ 
This does not converges to zero as $\mu_0=0$, but the key observation is that $|u(x,t)-u(y,t)|$ does converges to zero, and in fact   
\begin{equation*}
    |u(x,t)-u(y,t)| \approx e^{-\mu_1 t} \text{ as } t \to \infty.
\end{equation*}
Thus the main step is to establish the estimate 
\begin{equation*}
    |u(x,t)-u(y,t)| \approx C e^{-\bar{\mu}_1 t}
\end{equation*}
for any solution to the heat equation. 
This turns out to be an easy consequence of the modulus of continuity estimates. Then taking $u(x,t)=e^{-\mu_1 t} \vp_1(x)$ leads to 
\begin{equation*}
    |\vp_1(x) -\vp_1(y)| \leq C e^{(\mu_1 -\bar{\mu}_1)t},
\end{equation*}
which implies $\mu_1 \geq \bar{\mu}_1$ by letting $t \to \infty$.

We begin with an immediate consequence of Theorem \ref{thm MC}, which asserts that if $\vp(\cdot, 0)$ lies above the modulus of continuity of $u(\cdot,0)$ and we evolve $\vp$ by \eqref{ODE K}, then $\vp(\cdot,t)$ lies above the modulus of continuity of $u(\cdot,t)$ for each positive $t$. 
\begin{corollary}\label{corollary 1}
Let $M$ and $u$ be the same as in Theorem \ref{thm MC}. Suppose $\vp: [0,D/2]\times [0,T) \to \R$ satisfies 
\begin{enumerate}
    \item $\vp_t \geq \L\vp$;
    \item $\vp' \geq 0$ on $[0,D/2]\times [0,T)$;
    \item $|u(y,0)-u(x,0)| \leq 2\vp \left(\frac{d(x,y)}{2},0 \right)$.
\end{enumerate}
Then 
\begin{equation*}
    |u(y,t)-u(x,t)| \leq 2\vp \left(\frac{d(x,y)}{2},t \right)
\end{equation*}
for all $x,y\in M$ and $t\in [0,T)$. 
\end{corollary}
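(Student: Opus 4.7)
The plan is to argue by contradiction using the perturbed comparison function
\[
Z_\e(x,y,t) := u(y,t)-u(x,t) - 2\vp\!\lf(\tfrac{d(x,y)}{2},t\ri) - \e(1+t)
\]
on $M\times M\times [0,T)$ for small $\e>0$. Condition~(3) guarantees $Z_\e(\cdot,\cdot,0)\le -\e<0$; if the conclusion failed somewhere, then by compactness of $M$ there would exist a first time $t_0>0$ at which $\sup_{(x,y)}Z_\e(x,y,t_0)=0$, attained at some pair $(x_0,y_0)$. The goal is to derive a contradiction at $(x_0,y_0,t_0)$; sending $\e\downarrow 0$ and invoking symmetry in $x,y$ will then yield the absolute-value conclusion.

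Granting $x_0\ne y_0$, set $s_0:=d(x_0,y_0)/2>0$, and reuse the variational computation from the proof of Theorem~\ref{thm MC}. Choosing the complex Fermi frame $\{e_i(s)\}$ along a unit-speed minimizing geodesic $\g_0$ from $x_0$ to $y_0$ and testing with the Jacobi-type variations $\eta(s)=c_{4\k_1}(s)/c_{4\k_1}(s_0)$ and $\zeta(s)=c_{\k_2}(s)/c_{\k_2}(s_0)$, together with the longitudinal reparametrization of $\g_0$, yields the spatial inequality $Q[u](y_0,t_0)-Q[u](x_0,t_0)\le 2\L\vp(s_0,t_0)$. The first-derivative-in-time condition at the space-time maximum of $Z_\e$ provides the matching bound $Q[u](y_0,t_0)-Q[u](x_0,t_0)\ge 2\vp_t(s_0,t_0)+\e$, and combining both with condition~(1), $\vp_t\ge \L\vp$, forces $\e\le 0$, the desired contradiction.

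The main obstacle is ruling out the diagonal contact $x_0=y_0$, where the distance function is not smooth and the geodesic argument collapses. Taking $x=y$ in~(3) gives $\vp(0,0)\ge 0$, so $Z_\e$ is strictly negative on the diagonal at $t=0$. A diagonal contact at $t_0$ would require $\vp(0,t_0)=-\e(1+t_0)/2<0$; since $T_{\k_1}(0)=T_{\k_2}(0)=0$, condition~(1) reduces at $s=0$ to $\vp_t(0,t)\ge \a(\vp'(0,t))\vp''(0,t)$, and the monotonicity~(2) controls the sign of $\vp''(0,t)$ whenever $\vp'(0,t)=0$. A short case analysis on $\vp'(0,t)$ at the first zero-crossing of $\vp(0,\cdot)$, combined with the first-order test $|\n u(x_0,t_0)|\le \vp'(0,t_0)$ that must hold at any diagonal maximum of $Z_\e$, excludes the diagonal case and completes the proof.
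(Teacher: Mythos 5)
Your overall strategy — perturb by a strictly increasing function of time, locate the first interior maximum, and reuse the two-point variational computation from Theorem~\ref{thm MC} — is exactly the paper's strategy; the paper just packages it by taking $\vp_\eps = \vp + \eps e^t$ and invoking Theorem~\ref{thm MC} as a black box (a strict supersolution cannot touch a viscosity subsolution from above), whereas you inline the variational argument on $M\times M$ with the perturbation $\eps(1+t)$. These are cosmetically different but substantively the same, and your version is, if anything, more explicit.

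Where your write-up actually goes further than the paper is in noticing that the diagonal $x_0=y_0$ is a genuine issue: the viscosity-subsolution property established in Theorem~\ref{thm MC} only covers touching points $(s_0,t_0)\in(0,D/2)\times(0,T)$, so the paper's one-line proof silently assumes the first touching is interior. Unfortunately your sketch of how to rule out the diagonal does not close. The time-derivative test at the first zero of $\vp(0,\cdot)+\tfrac{\eps}{2}(1+\cdot)$ gives $\vp_t(0,t_0)\le -\eps/2$, and combined with~(1) at $s=0$ (where $T_{4\k_1}=T_{\k_2}=0$) this forces $\vp''(0,t_0)<0$. If $\vp'(0,t_0)=0$ then $\vp'\ge 0$ makes $s=0$ a boundary minimum of $\vp'$, so $\vp''(0,t_0)\ge 0$, a contradiction. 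But if $\vp'(0,t_0)>0$ you get no contradiction, and the first-order condition $|\n u(x_0,t_0)|\le\vp'(0,t_0)$ that you invoke is satisfied rather than violated, so it does not help. In fact the gap is real: take $u\equiv 0$ on a flat K\"ahler torus (so $\a=\b=1$, $\k_1=\k_2=0$) and $\vp(s,t)=-2t+Ds-s^2$. Then $\vp'=D-2s\ge 0$ on $[0,D/2]$, $\vp_t=-2=\vp''=\L\vp$, and $\vp(s,0)=s(D-s)\ge 0$, so (1)--(3) all hold, yet $\vp(0,t)=-2t<0$, contradicting the stated conclusion at $x=y$. The clean fix is to add the hypothesis $\vp(0,t)\ge 0$ (equivalently, $\vp$ really does dominate the modulus of continuity at $s=0$), which immediately rules out diagonal contact since $Z_\eps(x,x,t)=-2\vp(0,t)-\eps(1+t)<0$; this extra hypothesis is harmless for the paper's application, where $\vp(0,t)=Ce^{-\bar\mu_1 t}\phi(0)=0$. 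Note the paper's own proof has the very same gap and does not even flag it, so you are ahead of the paper in diagnosing the problem but the proposed repair needs to be replaced by the additional hypothesis on $\vp(0,\cdot)$.
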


\begin{proof}[Proof of Corollary \ref{corollary 1}]
For any $\e >0$, the function $\vp_\epsilon=\vp+\epsilon e^t$ satisfies 
$$(\vp_\e)_t > \L \vp_\e,$$ 
so it cannot touch $\w$ from above by Theorem \ref{thm MC}. 
\end{proof}

On the interval $[0, D/2]$, we define the following corresponding one-dimensional eigenvalue problem with boundary conditions
$\phi(0)=0$ and $\phi'(D/2)=0$:
\begin{equation}
\bar{\sigma}_1(m, \k_1,\k_2, D/2)=\inf \left\{\frac{\int_0^{\frac D 2} |\phi'|^2 c_{\k_2}^{2m-2}c_{4\k_1} \, ds}{\int_0^{\frac D 2} |\phi|^2 c_{\k_2}^{2m-2}c_{4\k_1} \, ds}, \text{\quad with\quad} \phi(0)=0 \right\},
\end{equation}
where $c_\k$ is defined in \eqref{def c kappa}. 

\begin{lemma}\label{lmb2}
\begin{equation}
\bar{\mu}_1(m,\k_1,\k_2, D)=\bar{\sigma}_1(m, \k_1,\k_2, D/2).
\end{equation}
\end{lemma}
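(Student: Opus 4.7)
The plan is to put the ODE for $\bar\mu_1$ in Sturm-Liouville self-adjoint form with weight
\[
w(s) := c_{\k_2}(s)^{2(m-1)}\, c_{4\k_1}(s),
\]
and then reduce the symmetric Neumann problem on $[-D/2, D/2]$ to the mixed problem on $[0, D/2]$ defining $\bar\sigma_1$ by using the reflection $s \mapsto -s$.

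For the self-adjoint form, comparing \eqref{def T} and \eqref{def c kappa} gives the identity $T_\k = -c_\k'/c_\k$ for every $\k \in \R$, so
\begin{equation*}
\frac{w'(s)}{w(s)} = 2(m-1)\frac{c_{\k_2}'(s)}{c_{\k_2}(s)} + \frac{c_{4\k_1}'(s)}{c_{4\k_1}(s)} = -\bigl(2(m-1)T_{\k_2}(s) + T_{4\k_1}(s)\bigr),
\end{equation*}
and the eigenvalue equation appearing in Theorem \ref{thm main} becomes $(w\vp')' = -\l\, w \vp$.

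Next, since $T_\k$ is odd and $w$ is even, the reflection $s \mapsto -s$ commutes with this differential operator, and one-dimensional Sturm-Liouville spectra are simple, so each Neumann eigenfunction on $[-D/2, D/2]$ must be either purely even or purely odd. Sturm's oscillation theorem then says the $n$-th nontrivial eigenfunction has exactly $n$ interior zeros. An even eigenfunction has its zeros in symmetric pairs $\pm s_0$ with $s_0 > 0$, since a zero at $s=0$ combined with the automatic $\vp'(0)=0$ of an even function would force $\vp \equiv 0$ by ODE uniqueness; hence even eigenfunctions have an even number of interior zeros, while odd ones have an odd number. The first nontrivial Neumann eigenfunction $\psi$ must therefore be odd, with $\psi(0) = 0$.

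Restricting $\psi$ to $[0, D/2]$ yields $\phi$ satisfying $\phi(0) = 0$ and $\phi'(D/2) = 0$; since $\phi$ has no zeros in $(0, D/2)$, it is the first eigenfunction of the mixed problem defining $\bar\sigma_1$, so $\bar\sigma_1 = \bar\mu_1$. The Rayleigh quotient expression stated for $\bar\sigma_1$ then follows by multiplying $(w\phi')' = -\bar\mu_1\, w\phi$ by $\phi$ and integrating by parts on $[0, D/2]$, with both boundary contributions vanishing from the mixed Dirichlet-Neumann conditions. The main subtle point is the parity-plus-oscillation step that forces the first nontrivial Neumann eigenfunction to be odd; everything else is a routine computation.
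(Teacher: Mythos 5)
Your proof is correct, but it takes a genuinely different route from the paper's. The paper runs a two-sided Rayleigh-quotient comparison: in one direction it extends a $\bar\sigma_1$-eigenfunction from $[0,D/2]$ to $[-D/2,D/2]$ by odd reflection to produce a trial function for $\bar\mu_1$; in the other it takes a $\bar\mu_1$-eigenfunction $\psi$, locates a zero $s_0 \in [0,D/2]$ (reflecting $s\mapsto -s$ if needed, since the weight is even), truncates by setting $\psi\equiv 0$ on $[0,s_0)$, and plugs the result into the $\bar\sigma_1$ quotient, which it evaluates exactly via integration by parts over $[s_0,D/2]$. This avoids oscillation theory entirely and needs only the variational characterizations. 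Your argument instead passes to the Sturm--Liouville self-adjoint form $(w\vp')' = -\lambda w\vp$, uses the evenness of $w$ plus simplicity of the spectrum to force parity of eigenfunctions, then invokes Sturm's oscillation/nodal count to pin down the first nontrivial Neumann eigenfunction as odd (one interior zero), whose restriction is sign-definite on $(0,D/2]$ and hence is the first eigenfunction of the mixed problem. This is a cleaner structural picture and delivers both inequalities at once (and also yields the odd, monotone eigenfunction used in the paper's Lemma~\ref{lmvp} as a byproduct), at the cost of invoking oscillation theory that the paper deliberately sidesteps. One hypothesis worth making explicit in your version: you need $w>0$ on the closed interval, i.e.\ the weight $c_{\k_2}^{2(m-1)}c_{4\k_1}$ must not vanish on $[-D/2,D/2]$; this is guaranteed by the diameter bounds under the curvature assumptions (Tsukamoto's theorem and its orthogonal-Ricci analogue), but it should be flagged since otherwise the Sturm--Liouville problem is singular at the endpoints.
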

\begin{proof}
Let $\phi(s)$ be an eigenfunction on $[0, D/2]$ corresponding to $\bar{\sigma}_1(m,\k_1,\k_2, D/2)$. 
Then for $s\in[-D/2, 0)$ we define $\phi(s)$ by $\phi(s)=-\phi(-s)$. Clearly, $\phi(s)$ defined on $[-D/2, D/2]$ is a trial function
for $\bar{\mu}_1(m, \k_1,\k_2, D)$. Therefore
$$
\bar{\mu}_1(m, \k_1,\k_2, D)\le \bar{\sigma}_1(m,\k_1,\k_2, D/2).
$$

For the other direction, let $\psi(s)$ be an eigenfunction corresponding to $\bar{\mu}_1(m,\k_1,\k_2, D)$. Without of loss of generality we assume further that $\psi(s_0)=0$ for some $s_0\in [0, D/2]$. If $s_0>0$, we define $\psi(s)=0$ for $s\in[0, s_0)$. Then
$\psi(s)$ is a trial function for $\bar{\sigma}_1(m,\k_1,\k_2, D/2)$, and we have
\begin{eqnarray*}
\bar{\sigma}_1(m,\k_1,\k_2, D/2) &\leq & \frac{\int_0^{\frac D 2} |\psi'|^2 c_{\k_2}^{2m-2}c_{4\k_1} \, ds}{\int_0^{\frac D 2} |\psi|^2 c_{\k_2}^{2m-2}c_{4\k_1} \, ds} \\
&=& \frac{\int_{s_0}^{\frac D 2} |\psi'|^2c_{\k_2}^{2m-2}c_{4\k_1} \, ds}{\int_{s_0}^{\frac D 2} |\psi|^2 c_{\k_2}^{2m-2}c_{4\k_1} \, ds} \\
&=& \bar{\mu}_1(m,\k_1,\k_2, D).
\end{eqnarray*}
This proves the lemma.
\end{proof}

\begin{lemma}\label{lmvp}
There exists an odd eigenfunction  $\phi$ corresponding to $\bar{\mu}_1(m,\k_1,\k_2, D)$ satisfying
$$
\phi'' - (T_{4\k_1}+2(m-1)T_{\k_2}) \phi' +\bar{\mu}_1(m, \k_1,\k_2, D)\phi =0
$$
in $(0, D/2)$ with  $\phi'(s)>0$ in $(0, D/2)$ and $\phi'(D/2)=0$.

\end{lemma}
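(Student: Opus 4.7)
The strategy is to produce $\phi$ by odd reflection of an eigenfunction of a one-sided Dirichlet--Neumann problem on $[0, D/2]$. By Lemma \ref{lmb2}, $\bar{\mu}_1(m,\k_1,\k_2,D) = \bar{\sigma}_1(m,\k_1,\k_2,D/2)$, so it suffices to exhibit an eigenfunction $\psi$ for $\bar{\sigma}_1$ on $[0, D/2]$ that is positive on $(0, D/2]$ with $\psi' > 0$ on $[0, D/2)$; the odd extension will then supply $\phi$.

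To prepare the existence step, I recast the ODE in Sturm--Liouville form. Setting $\mu(s) := c_{\k_2}^{2m-2}(s)\,c_{4\k_1}(s)$ and using the identity $T_\kappa = -c_\kappa'/c_\kappa$, a direct computation gives $(\ln \mu)' = -\bigl(2(m-1)T_{\k_2} + T_{4\k_1}\bigr)$, so the equation is equivalent to $(\mu \psi')' + \lambda \mu \psi = 0$ on $[0, D/2]$ with $\psi(0)=0$ and $\psi'(D/2)=0$. Standard Sturm--Liouville theory (handling the possible endpoint degeneracy of $\mu$ in the borderline diameter case by approximation) then produces an eigenfunction $\psi$ realizing $\bar{\sigma}_1$ with no interior zero, which I normalize so that $\psi > 0$ on $(0, D/2]$. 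Since $\psi(0)=0$, unique solvability of the initial value problem rules out $\psi'(0) = 0$, so $\psi'(0) > 0$.

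To upgrade this to $\psi'(s) > 0$ on all of $[0, D/2)$, I integrate $(\mu \psi')' = -\bar{\sigma}_1 \mu \psi$ from $s$ to $D/2$ and use $\psi'(D/2) = 0$:
$$\mu(s)\, \psi'(s) \;=\; \bar{\sigma}_1 \int_s^{D/2} \mu(t)\,\psi(t)\, dt \;>\; 0 \qquad \text{for } s \in [0, D/2),$$
since $\mu, \psi > 0$ on the interior of the integration range; dividing by $\mu(s) > 0$ yields the claim.

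Finally, I define $\phi(s) := \psi(s)$ for $s \geq 0$ and $\phi(s) := -\psi(-s)$ for $s < 0$. Continuity and $C^1$-regularity at the origin follow from $\psi(0)=0$ and $\phi'(0^\pm) = \psi'(0)$; evaluating the ODE at $s=0$ forces $\psi''(0) = 0$ (using $T_{\k_2}(0) = T_{4\k_1}(0) = 0$ and $\psi(0)=0$), so $\phi$ is in fact $C^2$ across the origin. Since each $T_\kappa$ is odd, the full ODE is invariant under $(s,\phi) \mapsto (-s, -\phi)$, so $\phi$ satisfies it on $(-D/2, D/2)$; the Neumann conditions $\phi'(\pm D/2) = 0$ come from $\psi'(D/2)=0$, and Lemma \ref{lmb2} identifies the eigenvalue as $\bar{\mu}_1$. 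The only nontrivial maneuver is the integration identity that turns the Sturm--Liouville equation together with the Neumann datum at $D/2$ into a transparent sign statement for $\psi'$; the rest is bookkeeping about existence, normalization, and the odd reflection.
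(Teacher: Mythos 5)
Your proof is correct and follows essentially the same route as the paper: invoke Lemma \ref{lmb2} to reduce to the one-sided Dirichlet--Neumann problem, recast the ODE in the Sturm--Liouville form $(c_{\k_2}^{2m-2}c_{4\k_1}\,\phi')' = -\bar{\mu}_1\,c_{\k_2}^{2m-2}c_{4\k_1}\,\phi$, and integrate from $s$ to $D/2$ against the Neumann datum to read off the sign of $\phi'$. The paper states the Sturm--Liouville identity and leaves the integration implicit; you spell out that step and the bookkeeping around the odd reflection, but the argument is the same.
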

\begin{proof}
Lemma \ref{lmb2} gives the existence of odd eigenfunction $\phi$ corresponding to $\bar{\mu}_1(m,\k_1,\k_2, D)$ with boundary conditions $\phi(0)=0$ and $\phi'(D/2)=0$,  which does not change sign in $(0,D/2]$. 
Since
$$
\Big(c_{\k_2}^{2m-2}c_{4\k_1} \phi'\Big)'=-\bar{\mu}_1(m,\k_1,\k_2, D) c_{\k_2}^{2m-2}c_{4\k_1} \phi,
$$
we can choose  $\phi(s)$ such that $\phi'(s)>0$ in $(0, D/2)$. 
\end{proof}

We provide the proof of Theorem \ref{thm main} below for the sake of completeness. 

\begin{proof}[Proof of Theorem \ref{thm main}]

For any $D_1>D$, let $\bar{\mu}_1=\bar{\mu}_1(m,\k_1,\k_2,D_1)$ 
be the first nonzero Neumann eigenvalue 
of the operator $\L$ defined in \eqref{eq 1D} on the interval $[-D_1/2,D_1/2]$. By Lemma \ref{lmvp}, the associated eigenfunction $\phi(s)$ is odd and can be chosen so that $\phi>0$ on $(0,D_1/2]$ and $\phi'(0)>0$. 
Let $u(x)$ be an eigenfunction corresponding to $\mu_1$.
Then there exists $C>0$ such that $$
u(y)-u(x)-2 C \phi\left(\frac{d(x,y)}{2}\right)\le 0
$$ for all $x,y\in M$. 
Direct calculation shows that the functions 
$v(x,t):=e^{- \mu_1 t} u(x)
$ and $
\vp(s,t):= C \, e^{-\bar{\mu}_1 t}\phi
$
satisfy
$$
\frac{\p v}{\p t}=\Delta v,
$$
and
$$ \frac{\p \vp}{\p t}=\L \vp,$$
respectively. 
Moreover, it's easy to verify that $C \vp$ satisfies all the conditions in Corollary \ref{corollary 1}, and therefore, we have 
\begin{equation*}
    u(y,t)-u(x,t) \leq 2 C \vp\left(\frac{d(x,y)}{2}, t \right), 
\end{equation*}
or equivalently,
\begin{equation}\label{eq3.9}
 e^{- \mu_{1} t} (u(y)-u(x))\le 2 C\,  e^{-\bar{\mu}_1 t}\phi\left(\frac{d(x,y)}{2}\right)
\end{equation}
for all $x,y\in M$ and $t>0$. Thus as $t\rightarrow \infty$, inequality \eqref{eq3.9} implies
$$
\mu_{1}\ge \bar{\mu}_1.
$$
Theorem \ref{thm main} then follows easily because $\bar\mu_1 \to \bar{\mu}_{1}(m,\k_1,\k_2,D)$ as $D_1 \to  D$.
\end{proof}

\begin{remark}
The modulus of continuity approach can be adapted to prove similar lower bounds for the first nonzero eigenvalue the $p$-Laplacian for Riemannian manifolds when $1<p\leq 2$. This is outlined in \cite[Section 8]{Andrewssurvey15} (see also \cite[Section 2]{LW19eigenvalue}). It's not hard to adapt our argument to prove 
lower bounds for the first nonzero eigenvalue of the $p$-Laplacian on K\"ahler manifolds when $1<p\leq 2$. The $p>2$ case can be handled with gradient estimates method (see \cite{NV14}\cite{Valtorta12}\cite{LW19eigenvalue2}) for Riemannian manifolds, but remains open for K\"ahler manifolds. 
\end{remark}

\section{Some Explicit Lower Bounds}

In this section, we prove the explicit lower bounds and estimates claimed in Proposition \ref{prop explicit bounds}. Then we compare our lower bound in Theorem \ref{thm main} with Lichnerowicz's lower bound. 

\begin{proof}[Proof of Proposition \ref{prop explicit bounds}]
For the $\k_1=\k_2=0$ case, simply observe that the corresponding first eigenfunction is given by $\sin\left(\frac{\pi t}{D}\right)$. 

If $\k_1>0$ and $\k_2=0$, then by a result of Tsukamoto \cite{Tsukamoto57}, the diameter is bounded from above by $\frac{\pi}{2\sqrt{\k_1}}$, which yield the inequality by domain monotonicity of the first Neumann eigenvalue. The inequality follows by observing that the first eigenfunction on the interval $[-\frac{\pi}{4\sqrt{\k_1}}, \frac{\pi}{4\sqrt{\k_1}}]$ is given by $\sin\left(2\sqrt{\k_1}t \right)$. 

Similarly, if $\k_1=0$ and $\k_2 >0$, then $\Ric^\perp \geq 2(m-1)\k_2$ implies $D\leq \frac{\pi}{\sqrt{\k_2}}$ by \cite[Theorem 3.2]{NZ18} and the first eigenfunction on $[-\frac{\pi}{2\sqrt{\k_2}}, \frac{\pi}{2\sqrt{\k_2}}]$ is $\sin\left(\sqrt{k_2} t\right)$ with first nonzero eigenvalue equal to $(2m-1)\k_2$. 
\end{proof}

It's interesting to compare Theorem \ref{thm main} with Lichnerowicz's lower bound in the positive curvature case. For exmaple, if $H\geq 4\k_1>0$ and $\Ric^\perp \geq 0$, then Proposition \ref{prop explicit bounds} gives 
\begin{equation*}
    \mu_1(m,\k_1,0,D) \geq \bar{\mu}_1\left(m,\k_1,0, \frac{\pi}{2\sqrt{\k_1}}\right)=8\k_1,
\end{equation*}
while Lichnerowicz's lower bound for K\"ahler manifolds yields $\mu_1 \geq 8\k_1$ since $\Ric \geq 4\k_1 >0$. Therefore, we have better (strictly better for any $D$ smaller than the maximal diameter $\frac{\pi}{2\sqrt{\k_1}}$ allowed under the condition $H\geq 4\k_1 >0$ by \cite{Tsukamoto57}) lower bounds under stronger curvature assumptions.  

\section{A Comparison Theorem for $d(x,\p M)$}

Let $(M^m,g,J)$ be a compact K\"ahler manifold with smooth nonempty boundary $\p M$. Denote by $d(x,\p M)$ the distance function to the boundary of $M$ given by 
\begin{equation*}
    d(x,\p M)=\inf\{d(x,y) : y \in \p M \}.  
\end{equation*}
Recall the inradius $R$ is given by 
\begin{equation*}
    R=\sup \{d(x,\p M) :x \in \p M \}.
\end{equation*}
In this section, we prove the following comparison theorem for the second derivatives of $d(x,\p M)$ on a K\"ahler manifold with boundary. 


\begin{thm}\label{Thm comparison distance to boundary}
Let $(M^m,g,J)$ be a compact K\"ahler manifold with smooth nonempty boundary $\p M$. 
Suppose that $H\geq 4\k_1$ and $\Ric^\perp \geq 2(m-1)\kappa_2$ for some $\k_1,\k_2 \in \R$, and the second fundamental form on $\p M$ is bounded from below by $\Lambda \in \R$. 
Let $\vp:[0, R ] \to \R_+$ be a smooth function with $\vp' \geq 0$. Then the function $v(x) =\vp\left(d(x,\p M)\right)$ is a viscosity supersolution of 
\begin{equation*}
    Q[v] =\left. \left[\a (\vp')\vp'' -\b(\vp')\vp'\left(2(m-1)T_{\k_2, \Lambda}+T_{4\k_1, \Lambda} \right) \right] \right|_{d(x,\p M)},
\end{equation*}
on $M$, where $Q$ is the operator defined in \eqref{eq isotropic}.
\end{thm}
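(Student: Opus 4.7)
The plan is to adapt Kasue's Riemannian argument for the distance-to-boundary comparison to the K\"ahler setting by splitting the transverse directions into the $J$-direction and its complement, applying the holomorphic sectional curvature bound to the former and the orthogonal Ricci bound (summed) to the latter. The key structural observation is that, for any $p_0\in\p M$ with inward unit normal $\nu$, the vector $J\nu$ lies in $T_{p_0}\p M$ (since $\langle J\nu,\nu\rangle=0$). Consequently, along a unit-speed minimizing geodesic $\gamma:[0,r_0]\to M$ from $\p M$ to an interior point $x_0$ with $r_0=d(x_0,\p M)$ (so $\gamma'(0)=\nu$), the parallel orthonormal frame $e_1=\gamma'$, $e_2=Je_1$, $e_3,\ldots,e_{2m}$ along $\gamma$ has every $e_i(0)$ (for $i\ge 2$) tangent to $\p M$, so genuine boundary variations in each of these directions are available.

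Given a smooth $\psi$ touching $v=\vp\circ d(\cdot,\p M)$ from below at $x_0$, set $h=\vp^{-1}\circ\psi$ (the degenerate case $\vp'(r_0)=0$ is handled by approximation). Then $h$ touches $d(\cdot,\p M)$ from below at $x_0$. A first-variation test along $\gamma$ together with the $1$-Lipschitz property of $d(\cdot,\p M)$ yields $\n h(x_0)=e_1(r_0)$; considering $s\mapsto h(\gamma(s))-s$ on both sides of $r_0$ along the extended geodesic (using $d(\gamma(s),\p M)\le s$) gives $h_{11}(x_0)\le 0$. For each $i\ge 2$ and each smooth test function $\eta$ with $\eta(r_0)=1$, I construct a smooth variation $\alpha_i(u,s)$ of $\gamma$ with $\alpha_i(u,0)\in\p M$ and $\p_u\alpha_i|_{u=0}=\eta\, e_i$ --- possible precisely because $e_i(0)\in T_{p_0}\p M$. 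Differentiating twice in $u$ the inequality $h(\alpha_i(u,r_0))\le L[\alpha_i(u,\cdot)]$ (saturated at $u=0$), invoking the second variation of arclength, and using the Gauss identity $\langle \nu,\n_u\p_u\alpha_i|_{s=0}\rangle=\eta(0)^2\,\mathrm{II}(e_i(0),e_i(0))\ge \Lambda\eta(0)^2$, the endpoint term $\langle e_1(r_0),\n_u\p_u\alpha_i|_{s=r_0}\rangle$ cancels between the two sides, producing
\[
h_{ii}(x_0)\le \int_0^{r_0}\lf[(\eta')^2-\eta^2\, R(e_1,e_i,e_1,e_i)\ri]ds-\Lambda\eta(0)^2.
\]

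For $i=2$, $R(e_1,Je_1,e_1,Je_1)$ is the holomorphic sectional curvature of $\spn\{e_1,Je_1\}$ and is bounded below by $4\k_1$; taking $\eta(s)=C_{4\k_1,\Lambda}(s)/C_{4\k_1,\Lambda}(r_0)$ and integrating by parts (using $\eta''+4\k_1\eta=0$, $\eta'(0)=-\Lambda\eta(0)$, $\eta(r_0)=1$) collapses the right-hand side to $\eta'(r_0)=-T_{4\k_1,\Lambda}(r_0)$. For $i=3,\ldots,2m$, using the \emph{same} test function $\zeta=C_{\k_2,\Lambda}/C_{\k_2,\Lambda}(r_0)$ in each direction and summing, the curvature terms aggregate to $\sum_{i=3}^{2m}R(e_1,e_i,e_1,e_i)=\Ric^\perp(e_1,e_1)\ge 2(m-1)\k_2$, yielding $\sum_{i=3}^{2m}h_{ii}(x_0)\le -2(m-1)T_{\k_2,\Lambda}(r_0)$. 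Expanding $\psi_{ij}=\vp''(h)h_ih_j+\vp'(h)h_{ij}$ at $x_0$ with $\n h(x_0)=e_1(r_0)$ gives
\[
Q[\psi](x_0)=\a(\vp')\vp''(r_0)+\a(\vp')\vp'(r_0)h_{11}+\b(\vp')\vp'(r_0)\sum_{i=2}^{2m}h_{ii},
\]
and inserting $h_{11}\le 0$ (together with $\a,\vp'\ge 0$) and the above Hessian bounds yields the desired supersolution inequality.

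The main technical obstacle will be the careful setup of the variation $\alpha_i$ and the resulting cancellation in the second variation of arclength: one must correctly treat the endpoint boundary term via the Gauss formula, with sign conventions relating $\mathrm{II}\ge\Lambda$ to the initial data $C_{\k,\Lambda}(0)=1$, $C_{\k,\Lambda}'(0)=-\Lambda$ tracked consistently throughout. Cut-locus nonsmoothness, the degenerate case $\vp'(r_0)=0$, and the boundary case $x_0\in\p M$ are routine to dispatch within the viscosity framework.
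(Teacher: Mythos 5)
Your proof is correct and takes essentially the same approach as the paper: the same splitting of the transverse directions into $Je_1$ versus the remaining $2m-2$ directions, the same test functions built from $C_{4\k_1,\Lambda}$ and $C_{\k_2,\Lambda}$, and the same second-variation-of-arclength computation with the boundary term controlled by $\mathrm{II}\ge\Lambda$. The only cosmetic difference is that you avoid the paper's explicit construction of a smooth upper barrier $\bar d\ge d(\cdot,\p M)$ (via a smooth family of curves) by composing with $\vp^{-1}$ and comparing $h=\vp^{-1}\circ\psi$ directly against the length functional, which is an equivalent packaging of the same variational argument.
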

\begin{proof}[Proof of Theorem \ref{Thm comparison distance to boundary}]
By approximation, it suffices to consider the case $\vp' >0$ on $[0,  R]$. 
By definition of viscosity solutions (see \cite{CIL92}), it suffices to prove that for any smooth function $\psi$ touching $v$ from below at $x_0 \in M$, i.e., 
\begin{align*}
    \psi(x) \leq v(x) \text{ on } M, \text{\quad}
    \psi(x_0) &= v(x_0),
\end{align*}
it holds that
\begin{equation*}
    Q[\psi](x_0) \leq \left. \left[\a (\vp')\vp'' -\b(\vp')\vp'\left(2(m-1)T_{\k_2, \Lambda}+T_{4\k_1, \Lambda} \right) \right] \right|_{d(x_0,\p M)}.
\end{equation*}
Since the function $d(x, \p M)$ may not be smooth at $x_0$, so we need to replace it by a smooth function $\bar{d}(x)$ defined in a neighborhood $U(x_0)$ of $x_0$ satisfying $\bar{d}(x) \geq d(x, \p M)$ for $x \in U(x_0)$ and $\bar{d}(x_0)=d(x_0, \p M)$. 
The construction is standard (see e.g. \cite[pp. 73-74]{Wu79} or \cite[pp. 1187]{AX19}), which we state below for reader's convenience. 

Since $M$ is compact, there exists $y_0 \in \p M$ such that $$d(x_0,y_0)=d(x_0, \p M):=s_0.$$ Let $\gamma:[0,s_0] \to M$ be the unit speed length-minimizing geodesic with $\gamma(0)=x_0$ and $\gamma(s_0)=y_0$.  
For any vector $X \in \exp^{-1}_{x_0}U(x_0)$, let $X(s), s\in [0,s_0]$ be the vector field obtained by parallel translating $X$ along $\gamma$, and decompose it as 
\begin{equation*}
    X(s) =a X^\perp (s) +b \gamma'(s)+cJ\g'(s),
\end{equation*}
where $a, b$ and $c$ are constants along $\gamma$ with $a^2+b^2+c^2 =|X|^2$, and $X^\perp(s)$ is a unit parallel vector field along $\gamma$ orthogonal to $\gamma'(s)$ and $J\gamma'(s)$. Define 
\begin{equation*}
    W(s)=a \, \eta(s) X^\perp(s) + b\left(1-\frac{s}{s_0} \right)\gamma'(s)+c\, \zeta(s)J\gamma'(s),
\end{equation*}
where $\eta, \zeta:[0,s_0] \to \R_+$ are two $C^2$ functions to be chosen later. 
Next we define the $n$-parameter family of curves $\gamma_X :[0,s_0] \to M$ such that 
\begin{enumerate}
    \item $\gamma_0 =\gamma$;
    \item $\gamma_X(0) =\exp_{x_0}(W(0))$ and $\gamma_X(s_0) \in \p M$;
    \item $W(s)$ is induced by the one-parameter family of curves $l \to \gamma_{lX}(s)$ for $l \in [-l_0,l_0]$ and $s\in [0,s_0]$;
    \item $\gamma_X$ depends smoothly on $X$.
\end{enumerate}
Finally let $\bar{d}(x)$ be the length of the curve $\gamma_X(x)$ where $x=\exp_{x_0}(X) \in U(x_0)$. 
Then we have
$\bar{d}(x) \geq d(x,\p M)$ on $U(x_0)$, $\bar{d}(x_0)=d(x_0, \p M)$. 

Recall the  first and second variation formulas:
\begin{equation*}
\n \bar{d}(x_0) =-\gamma'(0) 
\end{equation*}
and
\begin{eqnarray*}
    \n^2 \bar{d} (X,X)&=& -a^2 \eta(s_0)^2 A(X^\perp(s_0), X^\perp(s_0))+c^2 \zeta(s_0)^2 A(J\gamma',J\gamma') \\
    && +a^2 \int_0^{s_0} \left((\eta')^2 -\eta^2 R(X^\perp, \gamma',X^\perp, \gamma') \right) ds\\
    && +c^2 \int_0^{s_0} \left((\zeta')^2 -\zeta^2 R(J\gamma', \gamma',J\gamma', \gamma') \right) ds
\end{eqnarray*}
where $A$ denotes the second fundamental form of $\p M$ at $y_0$. Then for an orthonormal frame $\{e_i(s)\}_{i=1}^{2m}$ along $\gamma$ with $e_1(s) =\gamma'(s)$ and $e_2(s)=J\gamma'(s)$, we have
\begin{equation}\label{1st bard}
\n \bar{d}(x_0) =-e_1(0),
\end{equation}
and
\begin{equation}\label{2nd bardn}
\n^2 \bar{d} (e_1(0),e_1(0))=0.
\end{equation}
For $i=2$, we obtain by choosing $\zeta(s)=C_{4\kappa_1, \Lambda}(s_0-s) /C_{4\kappa_1, \Lambda}(s_0)$ with $C_{4\kappa_1, \Lambda}$ defined in \eqref{C def} that 
\begin{eqnarray}
    && \n^2 \bar{d} (e_2(0),e_2(0)) \\ \nonumber 
    &=& - \zeta(s_0)^2 A(e_2(s_0), e_2(s_0)) + \int_0^{s_0} (\zeta')^2 -\zeta^2 R(J\gamma', \gamma',J\gamma',\gamma') \, ds \\ \nonumber
    &\leq& -\frac{A(J\gamma'(s_0), J\gamma'(s_0))}{C_{4\k_1, \Lambda}(s_0)^2} +\int_0^{s_0} (\zeta')^2 -4\k_1 \zeta^2 \, ds \\\nonumber
    &=& -\frac{A(J\gamma'(s_0), J\gamma'(s_0))}{C_{4\k_1, \Lambda}(s_0)^2} +\frac{\Lambda}{C_{4\k_1, \Lambda}(s_0)^2}-T_{4\k_1, \Lambda}(s_0)\\
    &\leq& -T_{4\k_1, \Lambda}(s_0). \nonumber
\end{eqnarray}
For $3\le i \le 2m$, we have 
\begin{eqnarray*}
    \n^2 \bar{d} (e_i(0),e_i(0)) 
    =- \eta(s_0)^2 A(e_i(s_0), e_i(s_0)) + \int_0^{s_0} (\eta')^2 -\eta^2 R(e_i, \gamma',e_i, \gamma') \, ds.
\end{eqnarray*}
Summing over $3\leq i \leq 2m$ and choosing $\eta(s)=C_{\k_2, \Lambda}(s_0-s) /C_{\k_2, \Lambda}(s_0)$ with $C_{\k_2, \Lambda}$ defined in \eqref{C def} gives 
\begin{eqnarray}
     && \sum_{i=3}^{2m} \n^2 \bar{d} (e_i(0),e_i(0)) \\ \nonumber 
     &=& - \frac{\sum_{i=3}^{2m}A(e_i(s_0), e_i(s_0))}{C_{\k_2, \Lambda}(s_0)^2} + \int_0^{s_0} 2(m-1)(\eta')^2 -\eta^2 \Ric^\perp(\gamma',\gamma') \, ds \\\nonumber
    &\leq& - \frac{\sum_{i=3}^{2m}A(e_i(s_0), e_i(s_0))}{C_{\k_2, \Lambda}(s_0)^2} +\frac{2(m-1)\Lambda}{C_{\k_2, \Lambda}(s_0)^2}-2(m-1)T_{\k_2, \Lambda}(s_0) \\
    &\leq& -2(m-1)T_{\k_2, \Lambda}(s_0). \nonumber
\end{eqnarray}
Since the function $\psi(x) -\vp\left(d(x, \p M)\right)$ attains its maximum at $x_0$ and $\vp' >0$, it follows that the function $\psi(x) -\vp(\bar{d}(x))$ attains a local maximum at $x_0$. The first and second derivative tests yield
$$ \n  \psi (x_0) =-\vp' e_1 (0), \quad \psi_{11}(x_0)  \leq  \vp'',$$
    and
    $$
    \psi_{ii}(x_0)  \leq \vp' \n^2\bar{d} \left(e_i(0), e_i(0)\right)
$$
for $2 \le i \leq 2m$, where we used (\ref{1st bard}) and (\ref{2nd bardn}).
Here and below the derivatives of $\vp$ are all evaluated at $s_0=d(x_0, \p M)$. 
Thus we have 
\begin{eqnarray}\label{eq 3.1} 
     Q[\psi](x_0) &=& \a (\vp')\psi_{11} +\b(\vp') \sum_{i=2}^{2m} \psi_{ii} \\\nonumber
    &\leq& \a (\vp')\vp'' +\b(\vp')\vp'\left(\sum_{i=2}^{2m}  \n^2 \bar{d}(e_i(0),e_i(0)) \right) \\
    &\leq & \a (\vp')\vp'' -\b(\vp')\vp'\left(2(m-1)T_{\k_2, \Lambda}+T_{4\k_1, \Lambda} \right).\nonumber
\end{eqnarray}
The proof is complete. 
\end{proof}
\section{Lower Bounds for the First Dirichlet Eigenvalue}
Let $\bar{\l}_1:= \bar{\lambda}_1(m, \k_1,\k_2, \Lambda, R)$ denote the first eigenvalue of one-dimensional eigenvalue problem \begin{equation*}\label{}
    \begin{cases}
   \vp''-\left(2(m-1)T_{\k_2, \Lambda}+T_{4\k_1, \Lambda} \right)\vp'  =-\l\vp, \\
    \vp(0)=0,     \vp'(R)=0.
    \end{cases}
\end{equation*}
where $T_{\k,\Lambda}$ is defined in \eqref{def T kappa Lambda}. 
It's easy to see that 
\begin{equation*}
\bar{\lambda}_1=\inf \left\{\frac{\int_0^{R} |\phi'|^2 C_{\k_2,\Lambda}^{2m-2}C_{4\k_1,\Lambda} \, ds}{\int_0^{R} |\phi|^2 C_{\k_2,\Lambda}^{2m-2}C_{4\k_1,\Lambda} \, ds}, \text{\quad with\quad} \phi(0)=0 \right\},
\end{equation*}
where $C_{\k,\Lambda}$ is defined in \eqref{C def}. 

\begin{proof}[Proof of Theorem \ref{Thm C}]
Let $\vp$ be an eigenfunction with respect to
$\bar{\l}_1$, then 
$$
\vp''-\left(2(m-1)T_{\k_2, \Lambda}+T_{4\k_1, \Lambda} \right)\vp'  =-\bar{\l}_1\vp, 
$$
with $ \vp(0)=0$ and $\vp'(R)=0$. Moreover $\vp$ can be chosen so that $\vp>0$ on $(0, R]$ and $\vp'>0$ on $[0,R)$, see Lemma \ref{lmvp}. Define a testing function
$$
v(x):=\vp(d(x,\p M)).
$$
Then using Theorem \ref{Thm comparison distance to boundary} (choosing $\a =\b =1$), we have
$$
\Delta v(x) \le -\bar{\l}_1 v(x),
$$
away from the cut locus of $\p M$, and thus globally in the distributional sense. 
Notice that $v(x)>0$ in $M$ and $v(x)=0$ on $\p M$, then
we conclude from  Barta’s inequality (see \cite[Lemma 1.1]{Kasue84}) that
$$
\l_1 \ge \bar{\l}_1.
$$
The proof of Theorem \ref{Thm C} is complete.
\end{proof}










\section*{Acknowledgments} {We would like to thank Professors Ben Andrews, Lei Ni and Richard Schoen for helpful discussions. 
We are also grateful to Professor Guofang Wei for pointing out the reference \cite{BS19}}. 

\bibliographystyle{alpha}
\bibliography{ref}

\end{document}